\newtheorem{theorem}{Theorem}
\newtheorem{lemma}[theorem]{Lemma}
\newtheorem{corollary}[theorem]{Corollary}
\newtheorem*{defn}{Definition}
\newtheorem*{question}{Question}
\theoremstyle{remark}
\newcommand{\N}{{\mathbb N}}
\newcommand{\cP}{\mathcal{P}}
\newcommand{\cA}{\mathcal{A}}
\newcommand{\cB}{\mathcal{B}}
\newcommand{\cC}{\mathcal{C}}
\newcommand{\cD}{\mathcal{D}}
\newcommand{\cI}{\mathfrak{I}}
\newcommand{\cE}{\mathcal{E}}
\newcommand{\cF}{\mathcal{F}}
\newcommand{\cT}{\mathcal{T}}
\newcommand{\cS}{\mathcal{S}}
\newcommand{\cU}{\mathcal{U}}
\newcommand{\cV}{\mathcal{V}}
\renewcommand{\ge}{\geqslant}
\renewcommand{\le}{\leqslant}
\title{Probably Intersecting Families are Not Nested}
\author{  
Paul A.~Russell\footnote{Department
of Pure Mathematics and Mathematical Statistics,
Centre for Mathematical Sciences,
Wilberforce Road,
Cambridge CB3 0WB,
England. \tt P.A.Russell@dpmms.cam.ac.uk}
\and Mark Walters\footnote{School of Mathematical Sciences, Queen Mary,
University of London, London E1 4NS, England
\tt m.walters@qmul.ac.uk}}
\begin{document}
\maketitle

\begin{abstract}
It is well known that an intersecting family of subsets of an
$n$-element set can contain at most $2^{n-1}$ sets. It is natural to
wonder how `close' to intersecting a family of size greater than
$2^{n-1}$ can be.  Katona, Katona and Katona introduced the idea of a
`most probably intersecting family.'  Suppose that $\cA$ is a
family and that $0<p<1$. Let $\cA(p)$ be the (random) family formed by
selecting each set in $\cA$ independently with probability $p$. A
family $\cA$ is \emph{most probably intersecting} if it maximises the
probability that $\cA(p)$ is intersecting over all families of size
$|\cA|$.

Katona, Katona and Katona conjectured that there is a nested sequence
consisting of most probably intersecting families of every possible
size. We show that this conjecture is false for every value of~$p$
provided that $n$ is sufficiently large.

\end{abstract}

We start by recalling the definition of an \emph{intersecting family}: we say
that $\cA\subset\cP([n])$ is \emph{intersecting} if for any $A,A'\in\cA$ we
have $A\cap A'\not=\emptyset$. Since no intersecting family can
contain both a set $A$ and its complement $A^c$, it is easy to see that
there is no intersecting family containing more than $2^{n-1}$
sets. We remark that this upper bound is tight and that, in fact, any
intersecting family can be extended to an intersecting family of this
size.

Having observed this bound, it is natural to wonder how `close' to
intersecting a family of size greater than $2^{n-1}$ can be.  Katona,
Katona and Katona~\cite{3kat} introduced the idea of a \emph{most
  probably intersecting family}. Suppose that $\cA$ is a family and
that $0<p<1$. Let $\cA(p)$ be the (random) family formed by selecting
each set in $\cA$ independently with probability $p$. They asked which
family $\cA$ of  given size maximises the probability that $\cA(p)$
is intersecting.

In the same paper, they solve this problem in cases where $|\cA|$ is
only a little greater than $2^{n-1}$. More precisely, they find
extremal families for $|\cA|\le 2^{n-1}+\binom{n-1}{\lfloor
  (n-3)/2\rfloor}$.

They also conjectured that there are extremal families $\cA_i$ with
$|\cA_i|=2^{n-1}+i$ which are nested: that is, $\cA_i\subset \cA_j$
whenever $i<j$. In their paper it is a little unclear for which $p$
they make this conjecture: there seems to be no reason to believe the
optimal families are the same for different $p$. We remark that there
is a simple counter-example if $p\ll 2^{-n}$ (see the discussion
following Theorem~\ref{t:unique} below) and so clearly this was not
what was meant.  In this paper we prove the following theorem which
shows that the conjecture is false for all~$p$.
\begin{theorem}\label{t:not-nested}
  Suppose that $n\ge 21$. Let $2\le s\le 2^{n-1}$. Then
  \begin{itemize}
  \item 
  $[n]^{(\ge 3)}\cup \{A\in [n]^{(2)}:1\in A\}$ is the unique (up to
    reordering the coordinates) family of size $\sum_{k=3}^n
    \binom{n}{k}+n-1$ maximising the number of intersecting
    subfamilies of size $s$.
\item $[n]^{(\ge 3)}\cup \{A\in [n]^{(2)}:n\not\in A\}$ is the unique
  (up to reordering the coordinates) family of size $\sum_{k=3}^n
  \binom{n}{k}+\binom{n-1}{2}$ maximising the number of intersecting
  subfamilies of size $s$.
  \end{itemize}
\end{theorem}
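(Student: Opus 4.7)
The plan is a two-step argument: first reduce to families of the form $\cF = [n]^{(\ge 3)} \cup \cG$ with $\cG \subset [n]^{(2)}$ of the prescribed size $m \in \{n-1,\binom{n-1}{2}\}$, and then identify the unique optimal $\cG$.

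For the reduction, I would use an upward-shift compression. Given $B \in \cF$ and $A \supsetneq B$ with $A \notin \cF$, set $\cF' = (\cF \setminus \{B\}) \cup \{A\}$ and $\cH = \cF \setminus \{B\}$. Both differences $I_s(\cF) - I_s(\cH)$ and $I_s(\cF') - I_s(\cH)$ count intersecting $(s-1)$-subfamilies of $\cH$ every member of which meets the added set ($B$ or $A$ respectively); since every set meeting $B$ also meets $A$, we have $I_s(\cF') \ge I_s(\cF)$, with strict inequality whenever $\cF$ contains a set disjoint from $B$ but meeting $A$. Iterating, together with a refined exchange argument handling any missing triple that contains no pair from $\cF$, reduces the problem to $\cF$ of the stated form. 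The hypothesis $n \ge 21$ supplies enough witness sets that each compression step is strict in the extremal direction.

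Next, with $\cF = [n]^{(\ge 3)} \cup \cG$, I would decompose each intersecting subfamily $\cS \subset \cF$ of size $s$ by its pair-shadow $\cS \cap \cG$. Being a pairwise-intersecting family of pairs, $\cS\cap\cG$ is either empty, a single pair, a $k$-star centred at a unique vertex (for $k \ge 2$), or a triangle. By the vertex-symmetry of $[n]^{(\ge 3)}$, the number of intersecting extensions of each shadow by sets of size $\ge 3$ to a full $s$-subfamily depends only on the shadow's type, giving
\[
I_s(\cF) = I_s([n]^{(\ge 3)}) + m\,\alpha_1 + \sum_{k \ge 2}\alpha_k\sum_{v \in [n]}\binom{d_v(\cG)}{k} + \beta\,T(\cG),
\]
where $\alpha_k,\beta \ge 0$ depend only on $s,n$ and $d_v(\cG),T(\cG)$ are the degree and triangle count of $\cG$ viewed as a graph.

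It then remains to maximise the right-hand side over $\cG$ with $|\cG|=m$. For $m=n-1$, convexity of $\binom{x}{k}$ subject to $\sum_v d_v=2m$ and $d_v\le n-1$ forces the star $K_{1,n-1}$ to uniquely maximise each $\sum_v\binom{d_v}{k}$ for $k\ge 2$. For $m=\binom{n-1}{2}$, a Kruskal--Katona argument identifies $K_{n-1}$ as the unique maximiser of both $T(\cG)$ and every $\sum_v\binom{d_v}{k}$. The hard part will be the first case: the star has $T(\cG)=0$ while alternative $\cG$'s can have $T(\cG)\ge 1$, so I must verify that the loss in degree-moments dominates the triangle-count gain. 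This uses that $\alpha_2$, a count over the comparatively large ``star link'' $\{C\in [n]^{(\ge 3)} : v\in C \text{ or } \{u_1,u_2\}\subset C\}$, greatly exceeds $\beta$, a count over the much smaller ``triangle link'' $\{C : |C\cap\{a,b,c\}|\ge 2\}$; the hypothesis $n\ge 21$ provides the quantitative slack needed to make every comparison strict, uniformly across all $s$ in the given range.
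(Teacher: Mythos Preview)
Your second step --- decompose the count of intersecting $s$-subfamilies by the pair-shadow, show that the coefficient on intersecting pairs dominates the higher-star and triangle terms, then identify the unique pair-maximising family of $2$-sets --- is essentially the paper's Theorem~\ref{t:layer2}, and the domination is made precise there via an explicit two-to-one map $[r+2,n]^{(\ge 2)} \times \cI_r \to \cI_{r-1}$ (Lemma~\ref{l:stars}); this is where $n\ge 21$ is actually used.

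The genuine gap is in your first step. Upward shifts only reduce to an up-set, and there are many up-sets of size $\sum_{k\ge 3}\binom{n}{k}+m$ that do not contain all of $[n]^{(\ge 3)}$: for instance $\bigl([n]^{(\ge 3)}\setminus\{\{1,2,3\}\}\bigr)\cup\{\{4,5\},\{4,6\}\}$. To force $[n]^{(\ge 3)}\subset\cA$ you must at some point replace a $2$-set $B$ by a $3$-set $A$ with $B\not\subset A$, and there is no a~priori reason this should weakly increase the number of intersecting $s$-subfamilies: $A$ may well be disjoint from more members of $\cA$ than $B$ is. Your ``refined exchange argument'' is precisely the missing ingredient, and it is not a routine variant of the upward shift. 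In the paper this reduction is Theorem~\ref{t:unique}, proved using the $(U,v,f)$-compressions of~\cite{PARkat}: one adds a fixed $v$ while permuting the coordinates in $U$ by a fixed-point-free involution $f$, and the crucial, non-obvious fact is that each such compression admits an injection $\widehat C\colon \cI^{(s)}(\cA)\to\cI^{(s)}(C(\cA))$. Strictness, needed for uniqueness, is then obtained by exhibiting an explicit intersecting $s$-family in $C(\cA)$ outside the image of $\widehat C$ (Lemma~\ref{l:strict}). The bound $n\ge 21$ plays no role in this reduction; Theorem~\ref{t:unique} holds already for $n\ge 4$.
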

\noindent%
Clearly these families are not nested, even with a reordering of the
coordinates. 

We can think of forming $\cA(p)$ by first choosing a random variable
$s\sim \textrm{Binom}(|\cA|,p)$ and then choosing $s$ sets uniformly
at random from $\cA$. Hence Theorem~\ref{t:not-nested} shows that
these families are the unique most probably intersecting families for
these two specific sizes for any $0<p<1$.

If $p\ll 2^{-n}$ then the most likely scenario is that $\cA_p$ is
empty and the next most likely is that it consists of a single set. In
each case the subfamily $\cA_p$ is trivially intersecting regardless
of our choice of the original family $\cA$ (assuming, of course, that
$\emptyset\not\in\cA$).  The next most likely case is that there are
exactly two sets in $\cA_p$; this is far more likely than there being
more than two sets.  Hence, for very small $p$, proving
Theorem~\ref{t:not-nested} for the case $s=2$ would give a
counterexample to the conjecture. This was essentially done (except
the uniqueness) by Frankl~\cite{MR0427079} and independently by
Ahlswede~\cite{MR572471}.

Our proof of Theorem~\ref{t:not-nested} consists of two main
steps summarised by the following two theorems.

\begin{theorem}\label{t:unique}
  Suppose that $n\ge 4$, $N\in \N$ and $2\le s\le 2^{n-1}$.  If
\[
    \text{$n=2t$ is even and $N> 2^{n-1}+\tfrac12\tbinom{n}{t}-t$}
\]
or
\[
    \text{$n=2t+1$ is odd and $N> 2^{n-1}+\tbinom{n-1}{t-1}-t-1$}
\]
then any family $\cA\subset \cP([n])$ of size $N$ containing the
maximal number of intersecting subfamilies of size $s$ is of the form
$[n]^{(\ge r+1)}\cup \cB$ where $\cB\subset [n]^{(r)}$ and $r$
satisfies $\sum_{k=r+1}^n\binom nk \le N< \sum_{k=r}^n\binom nk$.
\end{theorem}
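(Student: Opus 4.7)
My plan is to prove this via a swapping argument. The key tool is an \emph{up-swap lemma}: if $A\in\cA$, $B\in\cP([n])\setminus\cA$, and $A\subsetneq B$, then $\cA'=(\cA\setminus\{A\})\cup\{B\}$ has at least as many intersecting $s$-subfamilies as $\cA$. The proof is a direct injection: intersecting $s$-subfamilies of $\cA$ avoiding $A$ are already intersecting $s$-subfamilies of $\cA'$, and the map $\cF\mapsto(\cF\setminus\{A\})\cup\{B\}$ sends intersecting $s$-subfamilies of $\cA$ containing $A$ injectively to intersecting $s$-subfamilies of $\cA'$ containing $B$ (using $B\supsetneq A$: any set meeting $A$ also meets $B$). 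The swap is moreover \emph{strict} whenever there is an intersecting $(s-1)$-subfamily $\cG$ of $\cA\setminus\{A\}$ all of whose sets meet $B$ but at least one of whose sets is disjoint from $A$.

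Taking $\cA$ optimal of size $N$ in the stated range, the plan is to derive a contradiction if $\cA$ is not of the stated form. First, if $\cA$ misses some $B^*\in[n]^{(\ge r+1)}$, then, because $|\cA|\ge\sum_{k\ge r+1}\binom nk$, one can find a chain $A_0\subsetneq A_1\subsetneq\cdots\subsetneq A_m=B^*$ with $A_0\in\cA$ and some consecutive pair $A_i\in\cA$, $A_{i+1}\notin\cA$; applying the strict up-swap to $(A_i,A_{i+1})$ yields a contradiction. Second, if $\cA$ contains some $A$ with $|A|<r$, then because $|\cA|<\sum_{k\ge r}\binom nk$ there is some $B\in[n]^{(r)}\setminus\cA$. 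A more general swap (removing $A$ and adding $B$, even without $A\subset B$) is then needed; its non-negativity can be read off from a comparison of the neighbourhoods $\{G\in\cA\setminus\{A\}:G\cap A\neq\emptyset\}$ and $\{G\in\cA\setminus\{A\}:G\cap B\neq\emptyset\}$, which favours the larger set $B$ under the density forced by the threshold. Combined, these give $[n]^{(\ge r+1)}\subset\cA$ and $\cA\cap[n]^{(<r)}=\emptyset$, hence the stated form.

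The main obstacle is the strict-improvement analysis. For each problematic pair $(A,B)$, one must produce inside $\cA\setminus\{A\}$ an intersecting $(s-1)$-family meeting $B$ on every set and disjoint from $A$ on at least one set. The thresholds $N>2^{n-1}+\tfrac12\binom{n}{t}-t$ for even $n$ and $N>2^{n-1}+\binom{n-1}{t-1}-t-1$ for odd $n$ are calibrated precisely so that $\cA$ is dense enough---in particular $|\cA\cap[n]^{(r)}|$ lies within $t$ (respectively $t+1$) of $\binom{n}{r}$---that such witness families can be built uniformly in $2\le s\le 2^{n-1}$. The sharp regimes are $s=2$, where the requirement reduces to a single-set witness and is the easiest, and $s$ close to $2^{n-1}$, where essentially every pairwise-intersecting subfamily of $\cA$ must contribute a witness; pinning down both ends and interpolating in between is the technical heart of the argument.
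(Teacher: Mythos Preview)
Your proposal has a genuine gap at the ``more general swap'': removing $A\in\cA$ and inserting a larger $B\notin\cA$ with $A\not\subset B$ need \emph{not} preserve the number of intersecting $s$-subfamilies, and no neighbourhood comparison can rescue this. For $s=2$ already, the change in the count is $|\{G\in\cA\setminus\{A\}:G\cap B\neq\emptyset\}|-|\{G\in\cA\setminus\{A\}:G\cap A\neq\emptyset\}|$, and nothing in the hypotheses forces this to be nonnegative; if most of $\cA$ clusters near $A$ rather than near $B$ the swap strictly decreases the count. This is exactly why the paper does \emph{not} argue by a bare size comparison. It first brings $\cA$ into canonical position via $ij$-compressions and up-set-compressions, and then uses the $(U,v,f)$-compressions of~\cite{PARkat}, which move a set to a non-superset while still admitting an explicit injection $\widehat C\colon\cI^{(s)}(\cA)\to\cI^{(s)}(C(\cA))$. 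That injection is the substitute for your missing monotonicity, and its construction is nontrivial.

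Two further points. First, your chain argument in the case $[n]^{(\ge r+1)}\not\subset\cA$ is incomplete: you need a chain \emph{below} $B^*$ starting inside $\cA$, but nothing prevents $\cA$ from containing no subset of $B^*$ at all (for instance, any up-set missing $B^*$). The paper circumvents this by compressing until the single missing set is the specific set $[n-r,n]$ and the small set present is the specific set $[r]$ (or a near-variant), and only then applies its strict-increase lemma. Second, you correctly flag that the strict improvement is ``the technical heart'', but you do not carry it out; the paper's Lemma~\ref{l:strict} builds an explicit family $\cD''\in\cI^{(s)}(C(\cA))$ outside the image of $\widehat C$, and the tight cases with $N$ within $t$ (resp.\ $t+1$) of $2^{n-1}+\tfrac12\binom{n}{t}$ (resp.\ $2^{n-1}+\binom{n-1}{t-1}$) require an entirely different, non-compression argument using Erd\H os--Ko--Rado, Kruskal--Katona, and Hall's theorem. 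None of this is recoverable from a pure up-swap strategy.
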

In~\cite{PARkat} it is shown that there exists \emph{some} family of
this form maximising the number of intersecting subfamilies of size $s$.
Theorem~\ref{t:unique} strengthens this result by showing that
\emph{all} the optimal families are of this form. This result may be
of interest in its own right.

As we remarked above, the extremal families for $s=2$ have been widely
studied. However, it does not appear to have been proved, even in this
case, that every extremal family must have the above form.

As $\cP([n])$ contains many different intersecting families of order
$2^{n-1}$, we trivially require $N>2^{n-1}$ in Theorem~\ref{t:unique}.
In fact, it is easy to see that a larger lower bound on $N$ is
actually required.  Indeed if we take \emph{any} maximal intersecting
family $\cA_0$ and form the family $\cA$ by adding a maximal set $A$ 
not in $\cA_0$ then this new family is extremal
for all $s$, since $A$ and $A^c$ are the only pair of disjoint sets in
$\cA$. In fact, the bound stated in Theorem~\ref{t:unique} is tight:
in Theorem~\ref{t:construct} we construct, for all appropriate values
of $N$, extremal families which are not of the desired form.

The final step is the following theorem which is at the heart of the
proof.
\begin{theorem}\label{t:layer2}
  Suppose that $n\ge 21$. Let $2\le s\le 2^{n-1}$ and $0\le i\le
  \binom{n}{2}$. Suppose that $\cA\subset \cP([n])$ is any family of
  size $\sum_{k=3}^n \binom{n}{k}+i$ of the form $[n]^{(\ge 3)}\cup
  \cB$ with $\cB\subset [n]^{(2)}$, and that, subject to these
  conditions, $\cA$ contains the maximal number of intersecting
  subfamilies of size $s$. Then $\cB$ is a family of size $i$
  contained in $[n]^{(2)}$ that contains the maximal number of
  intersecting pairs.
\end{theorem}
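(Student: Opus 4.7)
The plan is to expand $f_s(\cB)$ (the count of intersecting $s$-subfamilies of $\cA=[n]^{(\ge 3)}\cup\cB$) by how many of the chosen sets lie in $\cB$, isolate the term depending on $P(\cB)$, and argue that it dominates the higher-order variations.

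For any intersecting $\cS_1\subseteq\cB$ put
\begin{equation*}
X(\cS_1)=\{C\in[n]^{(\ge 3)}:C\cap B\ne\emptyset\text{ for all }B\in\cS_1\},
\end{equation*}
and write $I_m(Y)$ for the number of intersecting $m$-subfamilies of a family $Y$. Splitting an intersecting $s$-subfamily $\cS$ of $\cA$ as $\cS=\cS_1\sqcup\cS_2$ with $\cS_1\subseteq\cB$ and $\cS_2\subseteq X(\cS_1)$,
\begin{equation*}
f_s(\cB)=\sum_{k=0}^{s}\ \sum_{\substack{\cS_1\in\binom{\cB}{k}\\ \cS_1\text{ intersecting}}}I_{s-k}\bigl(X(\cS_1)\bigr).
\end{equation*}
Because $[n]^{(\ge 3)}$ is invariant under all coordinate permutations, the summand depends only on the isomorphism type of $\cS_1$ as a graph. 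For $k\le 2$ that type is determined by $k$ alone (empty, a single edge, or a path of two edges), so those contributions combine to $M_s+i\,L_1+P(\cB)\,L_2$ for constants depending only on $n,s$, with $L_2\ge 1$. In particular the $s=2$ case of the theorem is immediate.

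For $s\ge 3$ I compress. Given $1\le p<q\le n$, the $(p,q)$-shift $S_{pq}$ acts trivially on $[n]^{(\ge 3)}$ (which is fixed by the transposition $(p\ q)$), so $S_{pq}(\cA)=[n]^{(\ge 3)}\cup S_{pq}(\cB)$. An extension of the standard shifting argument for intersecting families yields $f_s(S_{pq}(\cB))\ge f_s(\cB)$, while the usual graph-theoretic inequality gives $P(S_{pq}(\cB))\ge P(\cB)$. Iterating over all $p<q$, we may assume $\cB$ is left-compressed. Convexity of $x\mapsto\binom{x}{2}$ applied to the degree sequence then identifies the left-compressed families of size $i$ that maximise $P(\cB)=\sum_v\binom{d_\cB(v)}{2}$ as those with degrees as unbalanced as possible. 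If $\cB$ is left-compressed but not of this form, a single-pair swap $\{a,b\}\mapsto\{a,c\}$ (followed by a re-compression) strictly increases $P$ while preserving the size.

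It remains to show that such a swap strictly increases $f_s$, giving the desired contradiction with the optimality of $\cB$. The $k\le 2$ contribution grows by $L_2\cdot\Delta P\ge L_2\ge 1$. The $k\ge 3$ contribution changes by a signed sum $\sum_{k\ge 3,\tau}L_{k,\tau}\,\Delta N_\tau(\cB)$, where $\tau$ ranges over isomorphism types of intersecting $k$-subfamilies of 2-sets and $N_\tau(\cB)$ counts the $\tau$-copies in $\cB$. Each $|\Delta N_\tau|$ is bounded by the number of $\tau$-copies through one of the swapped pairs, which is polynomial in $n$, and each $L_{k,\tau}=I_{s-k}(X(\cS_1))$ can be controlled in terms of $L_2$ via the inclusion $X(\cS_1)\subseteq X(\{B_1,B_2\})$ for any $\{B_1,B_2\}\subseteq\cS_1$. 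The main obstacle is assembling these pieces into a uniform bound showing that the total higher-order variation is strictly smaller than $L_2$; this is precisely where the numerical threshold $n\ge 21$ is forced, since below it the polynomial bounds on $|\Delta N_\tau|$ are not quite small enough relative to $L_2$ to guarantee a strict gain in $f_s$.
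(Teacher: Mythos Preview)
Your decomposition of $f_s(\cB)$ by the $2$-set part $\cS_1$ of an intersecting $s$-subfamily is the paper's starting point as well, and your identification of the $k\le 2$ contribution with $M_s+iL_1+P(\cB)L_2$ is correct. But the route you then take has two genuine gaps, one logical and one technical (the latter being exactly the ``main obstacle'' you yourself flag as unresolved).

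First, the reduction to left-compressed $\cB$ is not valid for this statement. You must show that \emph{every} $f_s$-optimal $\cB$ maximises $P$. Compressing an optimal $\cB$ to $\cB^*$ yields $f_s(\cB^*)=f_s(\cB)$ by optimality, so $\cB^*$ is also optimal and (granting the rest of your argument) $\cB^*$ maximises $P$; but from $P(\cB)\le P(\cB^*)=P_{\max}$ you cannot conclude $P(\cB)=P_{\max}$. Nothing in your sketch rules out a non-left-compressed $f_s$-optimiser with strictly sub-maximal~$P$.

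Second, the inclusion $X(\cS_1)\subseteq X(\{B_1,B_2\})$ gives only $L_{k,\tau}\le I_{s-k}\bigl(X(\{B_1,B_2\})\bigr)$, whereas $L_2=I_{s-2}\bigl(X(\{B_1,B_2\})\bigr)$. There is no inequality, uniform in $s$, between $I_{s-k}$ and $I_{s-2}$ of the same family; for $s$ close to $2^{n-1}$ the former can be vastly larger. The paper attacks this quite differently: rather than comparing each $L_{k,\tau}$ directly with $L_2$, it proves a ratio bound between \emph{consecutive} star-types by constructing an explicit at-most-two-to-one map from $[r+2,n]^{(\ge 2)}\times\cI_r$ into $\cI_{r-1}$ (together with the easy $|\cI_\cT|\le|\cI_3|$). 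This yields $|\cI_{r-1}|\ge\bigl(2^{n-r-2}-\tfrac{n-r}{2}\bigr)|\cI_r|$, and chaining down gives $|\cI_3|,|\cI_\cT|\le 2^{6-n}|\cI_2|$ and $|\cI_r|\le 2^{13-2n}|\cI_2|$ for $r\ge4$. One then checks $\sum_{r\ge3}a_r|\cI_r|+b|\cI_\cT|<|\cI_2|$ using only the trivial bounds $a_r\le n\binom{n-1}{r}$ and $b\le\binom{n}{3}$; this numerical check is where $n\ge21$ enters. With that in hand, any $\cB'$ with $P(\cB')\ge P(\cB)+1$ satisfies $f_s(\cB')>f_s(\cB)$ outright, so neither compressions nor swaps are needed and the conclusion applies to the original~$\cB$ directly.
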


The families $\cB\subset [n]^{(2)}$ of size $i$ maximising the number
of intersecting pairs are well understood: each is either a
quasi-clique or a quasi-star. We define these terms and discuss for
which $i$ each of these cases occurs after the following proof.

Note that we could rephrase the theorem to say that the family $\cA$
that maximises the number of intersecting subfamilies of size $s$
necessarily also maximises the number of intersecting pairs. This is
clearly equivalent as each set in $\cB$ intersects the same number of
sets in $[n]^{(\ge 3)}=\cA\setminus\cB$.

  Given Theorems~\ref{t:unique} and~\ref{t:layer2}, it is easy to prove
  Theorem~\ref{t:not-nested}.
\begin{proof}[Proof of Theorem~\ref{t:not-nested}]
First suppose that $\cA$ is a family of size $\sum_{k=3}^n
\binom{n}{k}+n-1$ maximising the number of intersecting subfamilies
of size $s$.  Theorem~\ref{t:unique} tells us that $\cA=[n]^{(\ge
  3)}\cup \cB$ for some $\cB\subset [n]^{(2)}$. Clearly we must have
$|\cB|=n-1$. Now Theorem~\ref{t:layer2} tells us that $\cB$ contains
the maximal number of intersecting pairs over all families in
$[n]^{(2)}$ of size $n-1$. It is obvious that $\cB=\{B\in
[n]^{(2)}:1\in B\}$ maximises the number of intersecting pairs, as all
pairs intersect, and, since $|\cB|>3$, that it is the unique (up to
reordering coordinates) family that does. Hence in this case $\cA$
must have the required form.

For the second case, suppose that $\cA$ is a family of size
$\sum_{k=3}^n \binom{n}{k}+\binom{n-1}2$ maximising the number of
intersecting subfamilies of size $s$. As above, we see that
$\cA=[n]^{(\ge 3)}\cup \cB$ for some $\cB\subset [n]^{(2)}$ and that
$\cB$ contains the maximal number of intersecting pairs over all
families in $[n]^{(2)}$ of size $\binom{n-1}{2}$.  Again, the extremal
family $\cB$ is unique up to reordering the coordinates: it consists
of all the 2-sets not containing $n$.  This is a little less obvious
but follows from the result for $i=n-1$ above. Indeed, the family
$\cB$ containing the most intersecting pairs minimises the number of
intersecting pairs with one element in $\cB$ and one element in
$\cB^c$. Thus it also maximises the number of intersecting pairs in
$\cB^c$. By the above, $\cB^c$ is $\{A:1\in A\}$ and the result follows
(after a reordering of the coordinates).
\end{proof}

In fact, the extremal families $\cB$ have been precisely
determined. Suppose $i=\binom a2+b$ with $0\le b<a$. The
\emph{quasi-complete graph} of order $n$ with $i$ edges is the graph
formed by taking a complete graph on $a$ vertices, adding a single
vertex joined to $b$ of the vertices of the complete graph and adding
$n-a-1$ isolated vertices. A \emph{quasi-star} is the complement of a
quasi-complete graph.

Ahlswede and Katona~\cite{MR505076} showed that the families of 2-sets
(graphs) with the most intersecting pairs (adjacent edges) are either
quasi-complete graphs or quasi-stars. Moreover, they showed that there
exists some non-negative integer $R$ (depending on $n$) such that for
$i<\frac12\binom n2-R$ and for $\frac12 \binom n2\le i\le \frac
12\binom n2+R$ the extremal family is a quasi-star, while for all
other values of $i$ the extremal family is a quasi-complete graph.
Wagner and Wang~\cite{MR2657026} extended this by finding the value of
$R$ explicitly and showing that it is non-zero for a proportion $\sqrt
2-1$ of numbers $n$. Combining Theorem~\ref{t:layer2} with these
results we see that the extremal families even for $N$ in this range
are surprisingly complicated: for many values of $n$ (i.e., those for
which $R\not=0$) the extremal families can switch between the two
classes three times just in this single layer.
\subsection*{Layout of Paper}
In the first section we define the notation we shall use and recall
the definitions and some of the properties of the compressions that we
use.  In the second section we prove a slightly weaker version of
Theorem~\ref{t:unique} that is sufficient (in combination with
Theorem~\ref{t:layer2}) to prove Theorem~\ref{t:not-nested}.  In the
third section we prove Theorem~\ref{t:layer2}.
In the fourth section we prove the remaining cases of
Theorem~\ref{t:unique} and give the constructions showing that the
lower bound on $N$ in Theorem~\ref{t:unique} is tight.
We conclude the paper with a discussion of some open problems.
\section{Notation and Preliminaries}

Most of the notation we use is standard. We write $[n]$ for the set
$\{1,2,\ldots,n\}$ and $[m,n]$ for the set $\{m,m+1,\ldots, n\}$. For
any $r$ we use $[n]^{(r)}$ to denote the set of subsets of $[n]$ of
size $r$, and $[n]^{(\ge r)}$ to denote the set of subsets of $[n]$ of
size at least $r$.

For any family $\cA$ we let $\cI^{(s)}(\cA)$ denote the collection of
intersecting subfamilies of $\cA$ of size $s$. For clarity, when $s$
is clear from the context we  suppress the superscript.

In much of this paper we shall be aiming to change or \emph{compress} a
family $\cA$ into a nice form without decreasing the number of
intersecting subfamilies of a given size. 

We  use three type of compression. The first is very simple:
we replace a set $A\in \cA$ by a set $A'\supset A$ with
$A'\not\in\cA$. Obviously this preserves the size of $\cA$ and does
not decrease the number of intersecting subfamilies. We call this an
\emph{up-set-compression}.

The second operation we use is a very standard compression called an
\emph{$ij$-compression.} We take each set $A$ in $\cA$ and, if $i\not \in A$
and $j\in A$, we replace $A$ by the set $A\cup\{i\}\setminus\{j\}$
\emph{provided} that this set is not already in $\cA$.  We note that
these compressions do not change the size of any set in $\cA$.

Again it is easy to see that this preserves the size of $\cA$. This
time, it is not obvious that the compression does not decrease the
number of intersecting subfamilies. It is, however, proved
in~\cite{PARkat}.

We will generally be applying these compressions when $i<j$ and we
call such a compression a \emph{left-compression}. 

The final operation we use is the $(U,v,f)$-compression recently
introduced in~\cite{PARkat}. Suppose that $U\subset[n]$ has even size,
that $f\colon U\to U$ is a permutation of order 2 with no fixed point,
and that $v\in[n]\setminus U$. We move each set $A\in\cA$ with $v\not
\in A$ to $(A\setminus U\cup \{v\})\cup f(A\cap U)$ unless this set is
already in $\cA$. Informally, we add $v$ and swap the points inside
$U$. Again it is clear that this does not change the size of $\cA$.
Note also that every set moved by this compression contains $v$ after
the move.

We shall use the following key property of these
$(U,v,f)$-compressions (proved in~\cite{PARkat}). For any such
compression $C$ there exists an injection $\widehat C$ from
$\cI^{(s)}(\cA)$ to $\cI^{(s)}(C(\cA))$ and so, in particular, the
number of intersecting subfamilies of any given order does not
decrease.  The only property of $\widehat C$ that we shall use is that
$\widehat C(\cB)\in \cI(C(\cA))$ is formed from $\cB\in\cI(\cA)$ by
sending each set $A\in\cB$ to either $A$ or $C(A)$. We remark that
constructing the injection $\widehat C$ is non-trivial.

\section{Proof of Theorem~\ref{t:unique}}

In this section we prove a slightly weaker version of
Theorem~\ref{t:unique} covering all the cases where
$N\ge\sum_{k=\lceil n/2\rceil -1 }^n\binom nk$: that is, the $N$ for
which our putative extremal family would contain all of the first
layer below the middle.  This is sufficient for our main result
(Theorem~\ref{t:not-nested}). For completeness, we prove the remaining
cases in Section~\ref{s:middle}.

Define $r=r(N,n)$ to be the unique number $r$ satisfying
$\sum_{k=r+1}^n\binom nk \le N< \sum_{k=r}^n\binom nk$. Thus the
bound for $N$ above corresponds to $r< n/2-1$.

We start by showing that if $\cA$ has a particularly nice form then
there is a $(U,v,f)$-compression that strictly increases the number of
intersecting subfamilies of size $s$.
\begin{lemma}\label{l:strict}
Let $2\le s \le 2^{n-1}$ and $\ell<\frac{n}2-1$. Suppose that $\cA$
satisfies $[n]^{(>\ell+1 )}\subset \cA$, $[n-\ell,n]\not \in \cA$ and
$[\ell]\in \cA$.  Then there is a $(U,v,f)$-compression $C$
such that $|\cI^{(s)}(C(\cA))|>|\cI^{(s)}(\cA)|$.
\end{lemma}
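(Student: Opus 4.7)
The plan is to pick a specific $(U,v,f)$-compression $C$ and then exhibit, using the injection property of $\widehat C$, an intersecting family in $C(\cA)$ of size $s$ that provably fails to lie in the image $\widehat C(\cI^{(s)}(\cA))$. Take $U=[\ell]\cup[n-\ell,n-1]$ (of even cardinality $2\ell$), $v=n$, and let $f\colon U\to U$ be the fixed-point-free involution $f(i)=n-i$ for $i\in[\ell]$, so that $f$ swaps $[\ell]\leftrightarrow[n-\ell,n-1]$. A direct computation gives
\[
C([\ell])=\emptyset\cup\{n\}\cup f([\ell])=[n-\ell,n],
\]
and since $[\ell]\in\cA$ but $[n-\ell,n]\notin\cA$, the set $[\ell]$ is genuinely moved, so $[n-\ell,n]\in C(\cA)\setminus\cA$.

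The key is a rigidity of preimages for two specific sets. For $[n-\ell,n]$: since $[n-\ell,n]\notin\cA$, any $A\in\cA$ with $A$ or $C(A)$ equal to $[n-\ell,n]$ must satisfy $C(A)=[n-\ell,n]$; unwinding $C(A)=(A\setminus U)\cup\{n\}\cup f(A\cap U)$ forces $A\setminus U=\emptyset$ and $A\cap U=f^{-1}([n-\ell,n-1])=[\ell]$, so $A=[\ell]$. For $[\ell+1,n]$: the hypothesis $\ell<n/2-1$ gives $|[\ell+1,n]|=n-\ell>\ell+1$, so $[\ell+1,n]\in\cA$; the only other potential $A$ with $C(A)=[\ell+1,n]$ is $A=[1,n-\ell-1]$, but this is not moved by $C$ exactly because $[\ell+1,n]\in\cA$. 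It follows that any $\cB''\in\cI^{(s)}(C(\cA))$ which simultaneously contains $[n-\ell,n]$ and $[\ell+1,n]$ lies outside $\widehat C(\cI^{(s)}(\cA))$: any putative $\cB\in\cI^{(s)}(\cA)$ with $\widehat C(\cB)=\cB''$ would have to contain the disjoint pair $\{[\ell],[\ell+1,n]\}$, contradicting intersection of $\cB$.

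It remains to construct such a $\cB''$ of size $s$ for every $2\le s\le 2^{n-1}$, which is the main obstacle. A workable starting pool is
\[
\bigl\{A\in C(\cA):\,A\supset[n-\ell,n]\bigr\}\cup\bigl\{A\in C(\cA):\,|A|>n/2,\ A\cap[n-\ell,n]\ne\emptyset\bigr\},
\]
which is pairwise intersecting (two supersets of $[n-\ell,n]$ share it; two sets of size $>n/2$ meet by pigeonhole; and across the two groups any superset contains $[n-\ell,n]$ which meets the other set by hypothesis), contains both $[n-\ell,n]$ and $[\ell+1,n]$ (the latter being a superset of $[n-\ell,n]$ of size $n-\ell>n/2$), and is entirely inside $C(\cA)$ because every member of size $\ge\ell+2$ lies in $[n]^{(>\ell+1)}\subset\cA$. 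The hypothesis $\ell<n/2-1$ makes this pool large enough that, for every $s\le 2^{n-1}$, one can extract an $s$-subset containing the two distinguished sets; for $s$ very close to $2^{n-1}$ one supplements with carefully chosen sets of size $n/2$ meeting $[n-\ell,n]$, picked compatibly from the complementary pairs of sets of size $n/2$. Once such a $\cB''$ is exhibited, $|\cI^{(s)}(C(\cA))|>|\cI^{(s)}(\cA)|$ follows.
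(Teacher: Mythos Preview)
Your overall plan matches the paper's, but there is a genuine gap in the preimage step for your second distinguished set $[\ell+1,n]$. The only property of $\widehat C$ available is that each $A\in\cB$ is sent to $A$ or to $C(A)$; it does \emph{not} say that $A$ can be sent to $C(A)$ only when $A$ is actually moved by the family-level compression. Now $[1,n-\ell-1]\in[n]^{(>\ell+1)}\subset\cA$ (its size is $n-\ell-1>\ell+1$), and under your chosen $(U,v,f)$ the abstract map gives $C([1,n-\ell-1])=[\ell+1,n]$. So from the stated property alone you cannot rule out a $\cB\in\cI^{(s)}(\cA)$ containing both $[\ell]$ and $[1,n-\ell-1]$, with $\widehat C$ sending the former to $[n-\ell,n]$ and the latter to $[\ell+1,n]$. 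Since $[\ell]\cap[1,n-\ell-1]=[\ell]\ne\emptyset$, no contradiction arises, and your $\cB''$ may well lie in the image of $\widehat C$. Your sentence ``this is not moved by $C$ exactly because $[\ell+1,n]\in\cA$'' is true at the family level but does not bear on what $\widehat C$ is allowed to do.

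The paper sidesteps this by taking the second distinguished set to be $[\ell+1,n-1]$ rather than $[\ell+1,n]$. Because $v=n\notin[\ell+1,n-1]$, any $A$ sent to $[\ell+1,n-1]$ must equal $[\ell+1,n-1]$ itself (the non-trivial branch of $C$ always lands in a set containing $v$), so the preimage is genuinely unique; and $[\ell]\cap[\ell+1,n-1]=\emptyset$ gives the desired contradiction. As a secondary point, your construction of a size-$s$ intersecting subfamily of $C(\cA)$ containing the two distinguished sets is rather hand-wavy (``one supplements with carefully chosen sets\ldots picked compatibly''). The paper handles this cleanly: the family $\cD=\{[n-\ell,n]\}\cup[n]^{(\ge n-\ell-1)}\setminus\{[n-\ell-1]\}$ is intersecting (here $n-\ell-1>n/2$), extends to a maximal intersecting family $\cD'$ of size $2^{n-1}$, and one checks $\cD'\subset C(\cA)$; then any $s$-element subfamily of $\cD'$ containing $[n-\ell,n]$ and $[\ell+1,n-1]$ does the job.
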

\begin{proof}
Choose $C$ to be any $(U,v,f)$-compression 
with $v=n$ that moves  $[\ell]$ to $[n-\ell,n]$, and let
$\cC=C(\cA)$ be the resulting family.  We construct a family in
$\cI^{(s)}(\cC)$ that is not the image of any family in
$\cI^{(s)}(\cA)$ under the injection~$\widehat C$.  

Consider the family
\[
\cD=\{[n-\ell,n]\}\cup [n]^{(\ge n-\ell-1)}\setminus\{[n-\ell-1]\}.
\]
This is intersecting, since $\ell< n/2-1$ and so $n-\ell-1>n/2$. Thus
it extends to a maximal intersecting family $\cD'$ of size $2^{n-1}$
in $\cP([n])$. Since $\cD'$ contains all sets of size at least
$n-\ell-1$ except $[n-\ell-1]$ and is intersecting, $\cD'$ contains no
set of size less than or equal to $\ell+1$ except $[n-\ell,n]$. By
hypothesis, $\cA$ contains all of $[n]^{(>\ell+1)}$ and thus so does
$\cC$. Moreover, $[\ell]\in \cA$ is moved to $[n-\ell,n]$ so
$[n-\ell,n]\in \cC$. Hence $\cD'\subset \cC$. Also, since
$\big|\,[\ell+1,n-1]\,\big|=n-\ell-1$, we have $[\ell+1,n-1]\in \cD'$.
Let $\cD''$ be any subfamily of size $s$ of $\cD'$ containing
both $[n-\ell,n]$ and $[\ell+1,n-1]$. Note $\cD''\in\cI(\cC)$.

Suppose that there is an intersecting subfamily $\cB$ of $\cA$ with
$\widehat C(\cB)=\cD''$. Recall that $\widehat C(\cB)$ is formed from
$\cB$ by sending each set $A\in\cB$ to either $A$ or $C(A)$.
Now $[n-\ell,n]\in\cD''$ but $[n-\ell,n]\not\in \cA$ so $[n-\ell,n]\not\in\cB$.
Hence $[n-\ell,n]$ must have come from $[\ell]\in\cB$. Also,
$[\ell+1,n-1]\in\cD''$ and, since $n\not\in[\ell+1,n-1]$, this set has a
unique pre-image under $C$, namely the set $[\ell+1,n-1]$
itself. Therefore $[\ell+1,n-1]\in\cB$.  But we also have $[\ell]\in\cB$,
contradicting the fact that $\cB$ is intersecting.

We conclude that $\cD''$ is not the image under $\widehat C$ of any
family in $\cI^{(s)}(\cA)$. Hence $|\cI^{(s)}(\cC)|>|\cI^{(s)}(\cA)|$.
\end{proof}
In Section~\ref{s:middle} we slightly strengthen this result, proving that with
some extra conditions it holds for $\ell=\lceil n/2\rceil -1$.

\begin{corollary}\label{c:unique-all-top}
  Let $n,N\in \N$ with $r=r(N,n)<n/2$ and $2\le s\le 2^{n-1}$. Suppose
  that $[n]^{(\ge r+1)}\subset \cA$ and that $\cA$ contains a set of
  size strictly less than $r$. Then there is a family of size $N$ that
  contains strictly more intersecting subfamilies of size $s$ than
  does $\cA$.
\end{corollary}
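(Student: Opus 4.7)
The plan is to perform a sequence of compressions that do not decrease $|\cI^{(s)}(\cA)|$ until Lemma~\ref{l:strict} becomes applicable with $\ell = r - 1$; the strict increase granted by that lemma then produces the required family of size $N$.

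First I would apply left-compressions to $\cA$ until it is left-compressed, producing $\cA_1$ of size $N$ with $|\cI^{(s)}(\cA_1)| \ge |\cI^{(s)}(\cA)|$. Since left-compressions preserve set sizes, $\cA_1$ still contains $[n]^{(\ge r+1)}$ in full and still contains some set of size strictly less than $r$. Let $\ell'$ denote the largest size less than $r$ attained by a set of $\cA_1$; then left-compressedness forces the initial segment $[\ell']$ to lie in $\cA_1$.

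If $\ell' < r-1$, the maximality of $\ell'$ implies $\cA_1$ contains no set of size strictly between $\ell'$ and $r$, so in particular $[r-1] \notin \cA_1$. I would then perform the up-set-compression replacing $[\ell']$ by $[r-1]$, obtaining $\cA_2$ of size $N$ with $|\cI^{(s)}(\cA_2)| \ge |\cI^{(s)}(\cA_1)|$, still containing $[n]^{(\ge r+1)}$, and now containing $[r-1]$. A short verification (or a second round of left-compressions, which fixes $[r-1]$ as the leftmost $(r-1)$-set and preserves $[n]^{(\ge r+1)}$) lets one take $\cA_2$ to be left-compressed. If $\ell' = r-1$ to begin with, simply set $\cA_2 = \cA_1$.

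To close, I would invoke Lemma~\ref{l:strict} with $\ell = r-1$. The bound $\ell < n/2 - 1$ is exactly the given $r < n/2$; the inclusion $[n]^{(>\ell+1)} = [n]^{(\ge r+1)} \subset \cA_2$ is immediate; $[\ell] = [r-1] \in \cA_2$ is by construction; and the final condition $[n-r+1, n] \notin \cA_2$ follows because $|\cA_2| = N < \sum_{k=r}^n \binom{n}{k}$ together with $[n]^{(\ge r+1)} \subset \cA_2$ forces the number of $r$-sets of $\cA_2$ to be strictly less than $\binom{n}{r}$, while left-compressedness makes any missing $r$-set include the rightmost one, $[n-r+1,n]$. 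Lemma~\ref{l:strict} then yields a family of size $N$ with strictly more intersecting subfamilies of size $s$ than $\cA_2$, and hence than $\cA$. The step requiring the most care is the preservation of left-compressedness through the up-set-compression, since the closing counting argument that locates $[n-r+1,n]$ as a missing $r$-set depends on it.
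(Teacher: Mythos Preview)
Your proof is correct and follows essentially the same route as the paper: compress until Lemma~\ref{l:strict} applies with $\ell=r-1$, checking that $[r-1]$ is present, $[n-r+1,n]$ is absent, and $[n]^{(\ge r+1)}$ is retained. The only difference is the order of operations---the paper first left-compresses to eject $[n-r+1,n]$ and then brings in $[r-1]$ while taking care not to reinsert $[n-r+1,n]$, whereas you first secure $[r-1]$ and then use a final left-compression plus the counting bound on $|\cA_2\cap[n]^{(r)}|$ to conclude $[n-r+1,n]\notin\cA_2$; your variant is arguably cleaner in that the delicate ``without putting $[n-r+1,n]$ back'' check is replaced by a direct appeal to left-compressedness.
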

\begin{proof}
By the definition of $r$ we see that $\cA$ does not contain all of
$[n]^{(r)}$. Hence by applying left-compressions we can ensure that
$[n-r+1,n]\not \in \cA$. Also, since $\cA$ contains some set of size
at most $r-1$, by applying up-set-compressions and left-compressions we
can ensure that $[r-1]\in \cA$; it is easy to check that we can do
this without putting $[n-r+1,n]$ into $\cA$. Thus Lemma~\ref{l:strict}
applies with $\ell=r-1$.
\end{proof}

\begin{lemma}\label{l:unique-not-all-top}
Suppose that $n,N\in \N$ with $r=r(N,n)<n/2-1$, that $[n]^{(\ge
  r+1)}\not\subset \cA$, and that $2\le s\le 2^{n-1}$. Then there is a
family that contains strictly more intersecting subfamilies of size
$s$ than does $\cA$.
\end{lemma}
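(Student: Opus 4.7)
The plan is to mirror the approach used for Corollary~\ref{c:unique-all-top}: compress $\cA$ into a canonical form and then invoke Lemma~\ref{l:strict} (or Corollary~\ref{c:unique-all-top} itself) to produce a strictly better family. The key data we will exploit is the up-set property of the complement, together with left-compression, both of which are preserved (weakly) by the compressions at our disposal.

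First I would apply all available left-compressions and up-set-compressions to $\cA$ (alternating as needed) until no such move is available, obtaining a family $\cA^*$ of size $N$ with $|\cI^{(s)}(\cA^*)|\ge|\cI^{(s)}(\cA)|$. Then $\cA^*$ is left-compressed and an up-set, so its complement is a down-set: if any set of size $k$ is missing from $\cA^*$ then every subset of it is also missing, and by left-compression the missing $k$-sets form an upward-closed collection in the shift order on $[n]^{(k)}$.

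Now split into two cases. In the first case, $[n]^{(\ge r+1)}\subset\cA^*$. If $\cA^*$ contains a set of size strictly less than $r$ then Corollary~\ref{c:unique-all-top} applied to $\cA^*$ immediately yields a family of size $N$ with strictly more intersecting $s$-subfamilies than $\cA^*$ (hence than $\cA$). Otherwise $\cA^*=[n]^{(\ge r+1)}\cup\cB$ with $\cB\subset[n]^{(r)}$, so since $\cA$ itself is not of this form the compression sequence must have moved at least one set into $[n]^{(\ge r+1)}$. I claim this step is strict: if $A\in\cA$ is replaced by $A'\supset A$ with $A'\in[n]^{(\ge r+1)}$, then picking $v\in A'\setminus A$ and any $G\in[n]^{(\ge r+1)}$ with $v\in G\subset[n]\setminus A$ (such $G$ exists in abundance because $n-|A|>n/2$ and the total number of missing sets in $[n]^{(\ge r+1)}$ is bounded by $\binom{n}{r}$), one produces an intersecting $s$-family in the new family containing $\{A',G\}$ whose pre-image under the up-set move is not intersecting, providing the strict increase. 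In the second case, $[n]^{(\ge r+1)}\not\subset\cA^*$. Since $\cA^*$ is up-set, the up-closure lemma (if $[n]^{(k)}\subset\cA^*$ then $[n]^{(\ge k)}\subset\cA^*$) forces $[n]^{(r+1)}\not\subset\cA^*$; by left-compression $[n-r,n]\notin\cA^*$; and by pigeonhole together with left-compression and up-closure, $[r]\in\cA^*$. I would then pick $\ell$ to be the largest integer with $[n]^{(\ell+1)}\not\subset\cA^*$ (so $[n]^{(\ge\ell+2)}\subset\cA^*$, $[n-\ell,n]\notin\cA^*$, and $[\ell]\in\cA^*$) and apply Lemma~\ref{l:strict}, yielding a $(U,v,f)$-compression $C$ with $|\cI^{(s)}(C(\cA^*))|>|\cI^{(s)}(\cA^*)|$, and hence the desired strictly better family.

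The main obstacle is the verification in the second case that the $\ell$ we choose satisfies the hypothesis $\ell<n/2-1$ of Lemma~\ref{l:strict}. This amounts to bounding the largest size of a missing set in $\cA^*$. The bound follows by combining the down-set structure of the complement (a missing set of size $k$ forces all $2^k$ of its subsets to be missing, so the total number of missing sets is at least $2^k$) with the global size bound $2^n-|\cA^*|\le\sum_{k\le r}\binom{n}{k}$ coming from $|\cA^*|\ge\sum_{k\ge r+1}\binom{n}{k}$, and the extra constraints imposed by left-compression on the shape of the missing family. A careful book-keeping of these constraints, using $r<n/2-1$, is what forces $\ell<n/2-1$, and this verification (together with cleanly organizing the alternation of the two kinds of compression so that the alternation terminates with $\cA^*$ simultaneously left-compressed and an up-set) is the technical heart of the argument.
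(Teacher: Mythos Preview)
Your Case~2 contains a genuine gap: the claim that the largest missing layer index $\ell$ satisfies $\ell<n/2-1$ is false in general, and no amount of book-keeping with the down-set and left-compression constraints will salvage it. The inequality your argument actually yields is $2^{\ell+1}\le\sum_{k\le r}\binom{n}{k}$, and this right-hand side can be enormously larger than $2^{r+1}$ (it is roughly $2^{nH(r/n)}$), so $\ell$ need not be anywhere near $r$. Concretely, take $n$ large and even, set $\cA^*=\cP([n])\setminus\cP([n/2,n])$; this is already a left-compressed up-set, so your compression sequence does nothing. Its size is $N=2^n-2^{n/2+1}$, and one checks that $r(N,n)$ is roughly $0.11\,n$, comfortably below $n/2-1$. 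Yet the set $[n/2,n]$ of size $n/2+1$ is missing, so your $\ell=n/2$ and Lemma~\ref{l:strict} does not apply. Nor can you apply Lemma~\ref{l:strict} with any smaller~$\ell$, since for $\ell'<n/2$ the layer $[n]^{(>\ell'+1)}$ still has holes.

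The paper's proof avoids this difficulty by bringing $(U,v,f)$-compressions into the \emph{sequence} of compressions (not just using one at the end), and crucially by restricting at every step to ``allowed'' compressions --- those that do not fill in all of $[n]^{(\ge r+1)}$. With this extra power one can show that the terminal family is missing exactly one set from $[n]^{(\ge r+1)}$, namely $[n-r,n]$, so that Lemma~\ref{l:strict} applies with $\ell=r<n/2-1$. Your restriction to only up-set- and left-compressions is precisely what allows the pathological $\cA^*$ above to persist unchanged. (Your Case~1(b) strictness argument is also incomplete --- you need the auxiliary set $G$ to lie in the intermediate family at the moment the up-set-compression is performed, not merely in $[n]^{(\ge r+1)}$ --- but this is a secondary issue compared with the failure of Case~2.)
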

\begin{proof}
  We aim to compress the family until it contains nearly all of
  $[n]^{(\ge r+1)}$. We then apply one more compression and use
  Lemma~\ref{l:strict} to show strict inequality for this final
  compression. We need to be careful that the earlier compressions do
  not `accidentally' put all sets in $[n]^{(\ge r+1)}$ into our family
    since then we would not necessarily obtain strict inequality when
    applying the final compression.

    We construct a sequence of families $\cA=\cA_0,\cA_1,\cA_2,\ldots,
    \cA_k$, with $[n]^{(\ge r+1)}\not\subset \cA_i$ for any $i$, by
    applying at each stage any `allowed' up-set-compression,
    left-compression or $(U,v,f)$-compression---that is, one which
    does not result in our family containing the whole of $[n]^{(\ge
      r+1)}$. We finish with a family $\cA_k$ that is unchanged by any
    compression $C$ with $[n]^{\ge(r+1)}\not\subset C(\cA_k)$. Note
    that $\cA_k$ is left-compressed since $ij$-compressions do not
    change the size of any set. Also, by considering
    up-set-compressions, we see that $\cA^+=\cA_k\cap [n]^{(\ge r+1)}$
    is an up-set and $\cA^-=\cA_k\cap [n]^{(\le r)}$ is an up-set when
    viewed as a subset of $[n]^{(\le r)}$. Obviously
    $\cA^+\not=[n]^{(\ge r+1)}$ and so $\cA^-$ is non-empty.

We claim that $\cA^+=[n]^{(\ge r+1)}\setminus \{[n-r,n]\}$. If only
one set from $[n]^{(\ge r+1)}$ is missing from $\cA^+$ then it must be
$[n-r,n]$.  Thus we may assume for a contradiction that at least two
of the sets in $[n]^{(\ge r+1)}$ are missing from $\cA^+$.  Since
$\cA^+$ is a left-compressed up-set we see that these missing sets
must include $[n-r,n]$ and $\{n-r-1\}\cup[n-r+1,n]$.  Similarly, as
$A^-$ is a non-empty left-compressed `up-set', we see that
$[r]\in\cA^-$.  We have now shown that $[r]\in\cA_k$, that
$[n-r,n]\not\in\cA_k$ and that $\{n-r-1\}\cup[n-r+1,n]\not\in \cA_k$.
The upper bound on $r$ implies that the sets $[r]$ and $[n-r,n]$ are
disjoint. Hence we can map the former to the latter using a
$(U,v,f)$-compression with $v=n-r$. This does not add the set
$\{n-r-1\}\cup[n-r+1,n]$ since all sets added by such a compression
contain $v=n-r$. Hence this is an allowed $(U,v,f)$-compression which
contradicts the definition of~$\cA_k$.

So $\cA_k$ contains all of $[n]^{(\ge r+1)}$ except for the set
$[n-r,n]$ and, as before, it must contain $[r]$.  Hence
Lemma~\ref{l:strict} applies with $\ell=r$.
\end{proof}
This essentially completes the proof of Theorem~\ref{t:unique} for
$r<\frac n2-1$. Indeed, by Lemma~\ref{l:unique-not-all-top}, $[n]^{(\ge
  r+1)}\subset \cA$ and so, by Corollary~\ref{c:unique-all-top}, $\cA$
has the required form.  

The only remaining cases are $\frac{n}2-1\le r\le \frac n2$. We deal
with these cases in Section~\ref{s:middle}.

\section{Proof of Theorem~\ref{t:layer2}}

  Fix $s$ and, as usual, let $\cI=\cI^{(s)}$ denote the collection of
  intersecting subfamilies of $\cA$ of size $s$. For $\cB\subset
  \cA\cap[n]^{(2)}$ let
\[
\cI_\cB=\{\cE\in \cI:\cE\cap [n]^{(2)}=\cB\}.
\]
We see that $\cI$ is the disjoint union of the sets $\cI_\cB$ over all
collections $\cB$ of $2$-sets in $\cA$. Moreover, $\cI_\cB$ is empty
unless $\cB$ is intersecting. Since all sets in $\cB$ have size two,
the structure of these intersecting families is simple. Indeed, for
all $0\le r\le n-1$ except $r=3$, there is a unique (up to reordering
the coordinates) intersecting family of size $r$ in $[n]^{(2)}$,
namely the star $\cS_r$ consisting of the sets $\{1t\}$ for $2\le t\le
r+1$. Trivially, for $n\ge 4$ there is no intersecting family of size
greater than $n-1$. For $r=3$ there are two intersecting families
(again up to reordering the coordinates), namely $\cS_3=\{12,13,14\}$
and $\cT=\{12,13,23\}$ which we shall call the star and the triangle
respectively.

Since, by hypothesis, we know that $\cA$ contains all sets of size at
least 3 and no sets of size 1, we have 
\[
\cI_\cB=\left\{\cE \subset \cP\left([n]^{(\ge 2)}\right):
\cE\cap[n]^{(2)}=\cB, \text{$\cE$ intersecting}\right\}.
\]
Hence the cardinality of $\cI_\cB$ depends only on which of
$\cS_0,\cS_1,\ldots \cS_{n-1},\cT$ is isomorphic to $\cB$. Let
\[\cI_r=\left\{\cE \subset \cP\left([n]^{(\ge 2)}\right):
\cE\cap[n]^{(2)}=\cS_r, \text{$\cE$ intersecting}\right\}
\]
and
\[\cI_\cT=\left\{\cE \subset \cP\left([n]^{(\ge 2)}\right):
\cE\cap[n]^{(2)}=\cT, \text{$\cE$ intersecting}\right\}.
\]

Let $a_r$ be the number of intersecting subfamilies of
$\cA\cap[n]^{(2)}$ isomorphic to $\cS_r$ and $b$ be the number isomorphic to $\cT$.
Then
\[
|\cI|=\sum_{r=0}^{n-1}a_r|\cI_r|+b|\cI_\cT|.
\]
Obviously $a_0=1$ and $a_1=|\cA\cap[n]^{(2)}|=i$
so the first two terms of the sum are independent of the collection
$\cA$. Trivially we have $a_r\le n\binom {n-1}r$ and $b\le \binom
n3$. If we compare $\cI_r$ and $\cI_{r-1}$ we see that a family in
$\cI_r$ has two extra restrictions: it must contain the set
$\{1,r+1\}$ (which gives us one fewer set to place) and each set must
intersect $\{1,r+1\}$ (which places an extra restriction on where
these other sets can lie). Thus we might expect $\cI_{r-1}$ to be much
larger than $\cI_{r}$.  That is precisely what Lemma~\ref{l:stars} will show.
First we need the following simple result.
\begin{lemma}\label{l:triangle}
  $|\cI_\cT|\le |\cI_3|$.
\end{lemma}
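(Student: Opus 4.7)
The plan is to construct an explicit injection $\phi : \cI_\cT \to \cI_3$. A member of $\cI_\cT$ has the form $\cT \cup \cE'$ where $\cE'$ is a size-$(s-3)$ subfamily of $\cF_\cT := \{A \in [n]^{(\ge 3)} : |A \cap \{1,2,3\}| \ge 2\}$; no further constraint is needed, since $\cF_\cT$ is itself intersecting (any two members meet inside $\{1,2,3\}$ by pigeonhole). By contrast, a member of $\cI_3$ has the form $\cS_3 \cup \cE''$ where $\cE''$ is a pairwise intersecting $(s-3)$-subfamily of $\cF_3 := \{A \in [n]^{(\ge 3)} : 1 \in A \text{ or } \{2,3,4\} \subset A\}$, and $\cF_3$ is not itself intersecting; so the inequality is not immediate from cardinalities alone.

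Partition $\cE' = \cE'_1 \sqcup \cE'_0$, where $\cE'_1$ collects the sets containing $1$ (and hence at least one of $2,3$) and $\cE'_0$ collects the sets containing $\{2,3\}$ but not $1$. Define $\phi(\cT \cup \cE') := \cS_3 \cup \cE'_1 \cup \phi_0(\cE'_0)$, where $\phi_0$ acts set-wise by $\phi_0(A) = A$ if $4 \in A$, and $\phi_0(A) = (A \setminus \{2,3\}) \cup \{1,4\}$ otherwise. Informally, sets of $\cE'_0$ already using $4$ are left alone (they lie in $\cF_3$ via the $\{2,3,4\}$-branch), while the rest are \emph{twisted} by swapping the pair $\{2,3\}$ for $\{1,4\}$ (landing in the $1$-branch of $\cF_3$).

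Three verifications complete the argument. First, $\phi(\cT \cup \cE')$ has the correct size $s$: $\phi_0$ is injective on $\cE'_0$ by inspection of the two cases, and $\phi_0(\cE'_0) \cap \cE'_1 = \emptyset$ since every image either avoids $1$ or avoids both $2$ and $3$, whereas each element of $\cE'_1$ contains $1$ and meets $\{2,3\}$. Second, the image is intersecting: members of $\cE'_1$ share $1$; members of $\phi_0(\cE'_0)$ all contain $4$; an $A \in \cE'_1$ meets an untwisted $\phi_0(A') \supset \{2,3,4\}$ through one of $\{2,3\}$ and meets a twisted image at $1$; and the three $2$-sets of $\cS_3$ meet everything by a similar case split. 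Third, $\phi$ is injective: from the image one reads off $\cE'_1$ as the subfamily whose sets contain $1$ and meet $\{2,3\}$, the untwisted images as those avoiding $1$, and the twisted images as those containing $1$ but disjoint from $\{2,3\}$; the inverse twist then recovers $\cE'_0$.

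The only mildly delicate point I anticipate is the bookkeeping for the disjointness $\phi_0(\cE'_0) \cap \cE'_1 = \emptyset$ and the case analysis for the intersecting property; neither should require any new idea beyond cleanly isolating the three disjoint types of image sets above, distinguished by whether they contain $1$ and whether they meet $\{2,3\}$.
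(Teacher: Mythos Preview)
Your injection is correct: the partition of $\cE'$ into $\cE'_1$ and $\cE'_0$ is well-defined, the map $\phi_0$ and its analysis (injectivity on $\cE'_0$, disjointness from $\cE'_1$, intersectingness of the image, and recoverability of the pre-image by reading off the three types distinguished by membership of $1$ and intersection with $\{2,3\}$) all check out as you describe.

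However, your route is quite different from the paper's. The paper gives a two-line counting argument: since the triangle $\cT$ has a \emph{unique} maximal intersecting extension $\cM_\cT=\{A:|A\cap\{1,2,3\}|\ge 2\}$ of size $2^{n-1}$, every $\cE\in\cI_\cT$ lies inside $\cM_\cT$, whence $|\cI_\cT|=\binom{2^{n-1}-3}{s-3}$; and this is the minimum possible value of $|\cI_\cB|$ over intersecting $3$-families $\cB$ of $2$-sets, since $\cS_3$ also extends to a maximal intersecting family whose only sets of size at most $2$ are the three sets of $\cS_3$. Your explicit injection avoids appealing to the extension-to-maximal property and instead works entirely constructively; this is longer but has the advantage of giving an explicit map rather than a cardinality comparison, and in fact your image family lands inside exactly the maximal family $\cM=\{A:1\in A,\,A\cap\{2,3,4\}\ne\emptyset\}\cup\{A:\{2,3,4\}\subset A\}$ that the paper's argument implicitly invokes.
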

\begin{proof}
 There is a unique maximal intersecting family
  containing $\cT$ and thus every intersecting family containing
  $\cT$ is contained in this unique maximal family. Hence the number
  of intersecting families of size $s$ containing $\cT$ is the
  smallest it can possibly be, namely $\binom{2^{n-1}-3}{s-3}$.
\end{proof}

We give one definition that will be useful in the proof.
\begin{defn}
  Suppose that $\cE,\cF$ are families in $\cP([n])$. We say they are
  \emph{cross-intersecting} (or that they \emph{cross-intersect}) if
  for every $E\in\cE$ and $F\in\cF$ we have $E\cap F\not=\emptyset$.
\end{defn}

\begin{lemma}\label{l:stars}
If  $r> 3$ then  $|\cI_{r-1}|\ge \left(2^{n-r-2}-\frac{n-r}{2}\right)|\cI_r|$
and if $r=3$ then $|\cI_{r-1}|\ge \left(2^{n-5}-\frac{n-1}{2}\right)|\cI_r|$.
\end{lemma}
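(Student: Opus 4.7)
The plan is to construct an injection $\phi$ from pairs $(\cE, Y)$, where $\cE \in \cI_r$ and $Y$ is a suitable ``replacement'' set, into $\cI_{r-1}$. For each $\cE \in \cI_r$ I would take $Y$ of the form $Y = \{2,\ldots,r\} \cup Y'$ with $Y' \subseteq \{r+2,\ldots,n\}$ (and $Y' \neq \emptyset$ when $r=3$, to force $|Y| \ge 3$), and set $\phi(\cE,Y) = (\cE \setminus \{\{1,r+1\}\}) \cup \{Y\}$. The image has size $s$, has $\cS_{r-1}$ as its 2-set part (since $|Y| \ge 3$), and is intersecting exactly when $Y$ meets every set of $\cF := \cE \setminus \cS_r$; call such a $Y$ \emph{valid} and write $V(\cE)$ for the set of valid $Y$'s.

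For injectivity, I would recover $Y$ from $\cE' = \phi(\cE, Y)$ as the unique element of $\cE' \setminus \cS_{r-1}$ that contains $\{2,\ldots,r\}$ but neither $1$ nor $r+1$. If two such candidates $Y_1 \neq Y_2$ coexisted in some $\cE'$, then in either candidate preimage the 2-set $\{1,r+1\}$ would sit alongside $Y_{3-i}$; yet $\{1,r+1\} \cap Y_{3-i} = \emptyset$, contradicting intersectingness. Hence $|\cI_{r-1}| \ge \sum_{\cE \in \cI_r} |V(\cE)|$, reducing the lemma to a lower bound on this sum. Because $Y \supseteq \{2,\ldots,r\}$ and $1 \notin Y$, the set $Y$ automatically meets every $F \in \cF$ containing $\{2,\ldots,r+1\}$ or meeting $\{2,\ldots,r\}$; the only obstructions are the \emph{bad} sets $F \in \cF$ with $1 \in F$ and $F \cap \{2,\ldots,r\} = \emptyset$, for which we require $Y' \cap \tilde F \neq \emptyset$ where $\tilde F := F \setminus \{1,r+1\} \subseteq \{r+2,\ldots,n\}$ is nonempty.

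The main obstacle is the final counting step. A naive union bound gives $|V(\cE)| \ge 2^{n-r-1} - \sum_{F\text{ bad}} 2^{n-r-1-|\tilde F|}$, which can drop well below $2^{n-r-2}$ when $\cF$ contains several bad 3-sets of the form $\{1,r+1,x\}$ (each such set has $|\tilde F|=1$ and individually halves the candidate pool). To reach the target $2^{n-r-2} - (n-r)/2$, I would have to leverage the intersecting structure of $\cF$ more carefully---either by a pointwise refinement that controls how many such highly restrictive bad sets can coexist with the rest of $\cF$, or by double-counting across $\cI_r$ to show that families with many restrictive bad sets are themselves scarce---ultimately converting the potential multiplicative loss into an additive correction of order $n-r$. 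For $r = 3$ there is the additional subtlety that $Y' = \emptyset$ would make $Y = \{2,3\}$ a 2-set, so $\phi(\cE, Y)$ would land in $\cI_\cT$ rather than in $\cI_{r-1}$; discarding this single option accounts for the weaker constant $(n-1)/2$ in place of $(n-3)/2$.
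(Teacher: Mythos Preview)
Your injection and its injectivity argument are fine, but the counting step is a genuine gap, not a detail to be filled in. Pointwise, $|V(\cE)|$ can be as small as~$1$: take $\cE$ to contain $\cS_r$ together with all $n-r-1$ sets $\{1,r+1,x\}$, $x\in[r+2,n]$ (these are pairwise intersecting through $\{1,r+1\}$ and meet every element of $\cS_r$ at~$1$, so they extend to many $\cE\in\cI_r$). Each such bad set forces $x\in Y'$, so the only valid $Y'$ is all of $[r+2,n]$. Your proposed refinements do not help: the ``pointwise'' route is dead by this example, and the ``averaging'' route would need to show that families with many bad $3$-sets are exponentially scarce in $\cI_r$, which is not true---adding all the $\{1,r+1,x\}$ imposes no extra constraint on sets not containing $1$ (they already contain $r+1$), so the number of such $\cE$ is not substantially smaller than $|\cI_r|$ itself.

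The paper's proof avoids this by abandoning the one-set-replacement idea entirely. It builds a map $\Phi\colon[r+2,n]^{(\ge2)}\times\cI_r\to\cI_{r-1}$ (into $\cI_{r-1}\cup\cI_\cT$ when $r=3$) that is at most $2$-to-$1$, so the factor $2^{n-r-2}-(n-r)/2$ is just $\tfrac12\bigl|[r+2,n]^{(\ge2)}\bigr|$ with no validity restriction. Given $U$ and $\cE$, the construction first replaces $\{1,r+1\}$ by $\{1,r+1\}\cup U$ if necessary, then partitions $\bar\cE$ according to intersections with $[r+1]$, and---depending on which of two cross-intersection conditions holds---either complements (within $[r+2,n]$) the sets of the form $\{1,r+1\}\cup E$ with $E\subseteq U$, or performs a more elaborate relabelling of several pieces at once. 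The point is that \emph{every} $U$ yields a legitimate image; the price is that recoverability holds only within each case, giving the factor of~$2$. Your scheme moves a single set and hopes the rest of $\cE$ cooperates; the paper's scheme reshapes a whole subfamily of $\cE$ so that cooperation is guaranteed.
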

\begin{proof}
For $r>3$ we construct a mapping $\Phi\colon[r+2,n]^{(\ge 2)}\times
\cI_r\to \cI_{r-1}$ under which every family in $\cI_{r-1}$ has at
most two pre-images. In the case $r=3$ we instead construct
$\Phi\colon[r+2,n]^{(\ge 2)}\times \cI_r\to \cI_{r-1}\cup
\cI_\cT$. Recalling that $|\cI_\cT|\le |\cI_3|$ this suffices to prove
the lemma.

Throughout the proof we write $X^c$ to denote the complement of the
set $X$ relative to $[r+2,n]$: that is, for  $X\subset[r+2,n]$ we write
$X^c=[r+2,n]\setminus X$.

Suppose that $\cE\in \cI_r$ and $U\in [r+2,n]^{(\ge 2)}$. Let
$U'=\{1,r+1\}\cup U$. First we tweak $\cE$ slightly to make sure that
it contains $U'$. If $U'\in\cE$ let $\bar\cE=\cE$; otherwise let
$\bar\cE=\cE\setminus\left\{\{1,r+1\}\right\}\cup \{U'\}$. Note that
the new family $\bar\cE$ is still intersecting since $\{1,r+1\}\subset
U'$.

We split $\bar\cE$ into pieces as follows: 
\begin{align*}
\cE_0&=\{E\in\bar\cE:1 \in E, E\cap[2,r]\not=\emptyset\}\\
\cE_1&=\{E\in\bar\cE:1 \in E, E\cap[2,r+1]=\emptyset\}\\
\cE_2&=\{E\in\bar\cE:1 \not\in E\}\\
\cE_3&=\{E\in\bar\cE:1,r+1 \in E, E\cap[2,r]=\emptyset\}.
\end{align*}
Clearly $\cE_2=\{E\in\cE:1 \not\in E\}$.
As $\cE\in \cI_r$ we know that $\cE$ is intersecting and 
$\{1,j\}\in \cE$ for $2\le j\le r+1$. Hence
$\cE_2=\{E\in\bar\cE:E\cap[r+1]=[2,r+1]\}$.

We define $\cE_1',\cE_2',\cE_3'$ to be the restrictions of
$\cE_1,\cE_2,\cE_3$ to $[r+2,n]$: that is, $\cE_i'=\{E\cap
[r+2,n]:E\in \cE_i\}$ for $i=1,2,3$. Since the intersection of a  set in $\cE_i$
($i=1,2,3$) with  $[1,r+1]$ depends only on $i$, we
see that the sets $\cE_1,\cE_2,\cE_3$ are determined by
$\cE_1',\cE_2',\cE_3'$ respectively.

We make a couple of remarks about this partition that will be helpful
later in the proof. First,  $U'\in \cE_3$ and so
$U\in \cE_3'$. Secondly, $\cE_1'$ and $\cE_2'$ are cross-intersecting.

To define our new intersecting family $\cF=\Phi(U,\cE)$ we split
into two cases according to  whether $U^c\cap E\not=\emptyset$ for all
$E\in \cE_1'$.  Note that if this condition does not hold then $U$
meets every element of $\cE_2'$. Indeed, suppose $F\in \cE_2'$ with
$F\cap U=\emptyset$ and $E\in \cE_1'$ with $E\cap U^c=\emptyset$. Then
$F\subset U^c$ and $E\subset U$ so $E\cap F=\emptyset$ which
contradicts the cross-intersection property observed above.

\bigskip\noindent%
\textbf{Case 1:} $U^c\cap E\not=\emptyset$ for all $E\in \cE_1'$.  
\medskip

\noindent%
This is the simpler case: here starting from $\bar\cE$ we replace each
set $X\in\bar\cE$ satisfying $\{1,r+1\}\subset X\subset U'$ by its
complement. Formally, let $\cF_i=\cE_i$ for $i=0,1,2$, let 
\[
\cF_3= \{\{1,r+1\}\cup E:E\in\cE_3',E\cap U^c\not=\emptyset\}  
\]
and let
\[
\cF_4= \{[2,r]\cup E^c:E\in\cE_3',E\subseteq U\}.
\]
Set $\cF=\cF_0\cup \cF_1\cup \cF_2\cup \cF_3\cup \cF_4$.

The families $\cF_0,\cF_1,\ldots,\cF_4$ are pairwise disjoint and
there is an obvious bijection from $\cE_3$ to $\cF_3\cup\cF_4$. Hence
$|\cF|=|\cE|$.

Moreover, this new family $\cF$ is intersecting: obviously
$\cF_0\cup\cF_1\cup\cF_2\cup\cF_3\subset \cE$ and so is intersecting,
and $\cF_4$ is an intersecting family, so we only need to check that
$\cF_4$ cross-intersects each of the other
$\cF_i$. Trivially $\cF_4$ cross-intersects $\cF_0$ and $\cF_2$. We
see that $\cF_3$ and $\cF_4$ cross-intersect as $U^c$ intersects every
set in $\cF_3$ and is contained in every set in $\cF_4$.  Finally,
$\cF_1$ and $\cF_4$ cross-intersect because we are assuming  that $U^c$
intersects everything in $\cE_1'$.

Next we show that if $r\ge 4$ then $\cF\cap[n]^{(\le2)}$ is
$\cS_{r-1}$, and if $r=3$ then $\cF\cap[n]^{(\le2)}$ is either $\cS_{r-1}$
or $\cT$. Since $\cE\cap[n]^{(\le2)}=\cS_r$, we see that
$\bar\cE\cap[n]^{(\le2)}$ is either $\cS_r$ or $\cS_{r-1}$. Clearly
$S_{r-1}\subset \cE_0$. If $\{1,r+1\}\in\bar\cE$ then it is in $\cE_3$
but not $\cF_3$. Hence
$(\cF_0\cup\cF_1\cup\cF_2\cup\cF_3)\cap[n]^{(\le2)}=S_{r-1}$. Finally,
if $r\ge 4$ then $\cF_4\cap[n]^{(\le2)}=\emptyset$; if $r=3$ then
$\cF_4\cap[n]^{(\le2)}$ is either empty or  $\{\{2,3\}\}$.
Thus we have shown that if $r\ge4$ then $\cF\in\cI_{r-1}$ and if $r=3$
then $\cF\in\cI_{r-1}\cup\cI_\cT$.

Finally, \emph{if we know that $\cF$ comes from this case} then we can
reconstruct $\bar\cE$. Indeed, given $\cF$, set $\cF_4=\{F\in \cF:
F\cap[1,r+1]=[2,r]\}$. Then form $\bar\cE$ from $\cF$ by replacing
each set in $\cF_4$ by its complement. We also know $U$ since
$[2,r]\cup U^c$ is the unique minimal element of $\cF_4$. Once we know
$\bar \cE$ and~$U$, it easy to reconstruct $\cE$.

\bigskip\noindent%
\textbf{Case 2:} $U\cap E\not=\emptyset$ for all $E\in \cE_2'$. 
\medskip

\noindent%
This time the construction is a little more complicated. We define
\begin{align*}
\cF_0&=\cE_0\\
\cF_1&= \{\{1,r+1\}\cup E:E\in\cE_1'\}  \\
\cF_2&= \{[2,r]\cup E:E\in\cE_2'\}  \\
\cF_3&= \{\{1\}\cup E:E\in\cE_3',U\subseteq E\}  \\
\cF_4&= \{[2,r+1]\cup E^c:E\in\cE_3',E^c\cap U\not=\emptyset\}
\end{align*}
and, as before, set $\cF=\cF_0\cup \cF_1\cup \cF_2\cup \cF_3\cup \cF_4$.

Again $\cF_0=\cE_0$ but this time $\cF_1\not=\cE_1$ and
$\cF_2\not=\cE_2$. However, there are bijections between $\cF_1$ and
$\cE_1$, and between $\cF_2$ and $\cE_2$. As before there is a
bijection between $\cF_3\cup\cF_4$ and $\cE_3$. Hence $|\cF|=|\cE|$.

Again $\cF$ is intersecting. Indeed, each $\cF_i$ is
trivially intersecting, $\cF_0$ is cross-intersecting with each of the
others, and each of the pairs $(\cF_1,\cF_3)$, $(\cF_1,\cF_4)$ and
$(\cF_2,\cF_4)$ is trivially cross-intersecting.  We see that $\cF_3$
and $\cF_4$ cross-intersect as $U$ is contained in every set in
$\cF_3$ and intersects every set in $\cF_4$. Also, $\cF_1$ and $\cF_2$
cross-intersect because $\cE_1'$ and $\cE_2'$ are
cross-intersecting. Finally, $\cF_2$ and $\cF_3$ cross-intersect
because we are assuming that $U$ intersects everything in $\cE_2'$.

Next we show that if $r\ge 4$ then $\cF\cap[n]^{(\le2)}$ is
$\cS_{r-1}$, and if $r=3$ then $\cF\cap[n]^{(\le2)}$ is either $\cS_{r-1}$
or $\cT$. We consider each $\cF_i\cap[n]^{(\le2)}$ in turn. We have
$\cF_0\cap[n]^{(\le2)}=\cE_0\cap[n]^{(\le2)}=\cS_{r-1}$. As $\{1\}\not
\in \cE$, $\emptyset\not\in\cE_1'$ so
$\cF_1\cap[n]^{(\le2)}=\emptyset$.  If $r\ge 4$ then
$\cF_2\cap[n]^{(\le2)}=\emptyset$; if $r=3$ then
$\cF_2\cap[n]^{(\le2)}$ is either empty or $\{\{2,3\}\}$. As $|U|\ge
2$, $\cF_3\cap[n]^{(\le2)}=\emptyset$. Finally, it is obvious that
$\cF_4\cap[n]^{(\le2)}=\emptyset$.
Again we have shown that if $r\ge4$ then $\cF\in\cI_{r-1}$ and if $r=3$
then $\cF\in\cI_{r-1}\cup\cI_\cT$.

Now, given $\cF$ we can determine $\cF_0,\cF_1,\ldots,\cF_4$ by
considering intersections with $[1,r+1]$. Thus, if we knew we were in
this case, we could reconstruct $\cE$ and $U$. (This time $\{1\}\cup
U$ is the unique minimal element of $\cF_3$).

\medskip
However, we cannot (necessarily) tell from which case the family $\cF$ came.
Thus the function is not necessarily injective but each family
in $\cI_{r-1}$ has at most two pre-images as required.
\end{proof}

The rest of the proof is straightforward calculation.  Recall that, by
hypothesis, $n\ge 21$. It follows from Lemma~\ref{l:stars} that
$|\cI_3|\le 2^{6-n}|\cI_2|$.  It also follows that $|\cI_4|\le
2^{7-n}|\cI_3|$, and that $|\cI_{r}|\le|\cI_{r-1}|$ for $r\ge5$. Thus,
for all $r\ge 4$, $|\cI_r|\le 2^{13-2n}|\cI_2|$.  Furthermore, by
Lemmas~\ref{l:triangle} and~\ref{l:stars}, $|\cI_\cT|\le
2^{6-n}|\cI_2|$. Recall $a_0=1$ and $a_1=i$. Now
\begin{align*}
|\cI|&=\sum_{r=0}^{n-1}a_r|\cI_r|+b|\cI_\cT|\\
&=|\cI_0|+i|\cI_1|+a_2|\cI_2|+\sum_{r=3}^{n-1}a_r|\cI_r|+b|\cI_\cT|\\
&=|\cI_0|+i|\cI_1|+|\cI_2|\left(a_2+
\sum_{r=3}^{n-1}a_r\frac{|\cI_r|}{|\cI_2|}
+b\frac{|\cI_\cT|}{|\cI_2|}\right)
\end{align*}
and 
\[
\sum_{r=3}^{n-1}a_r\frac{|\cI_r|}{|\cI_2|}
+b\frac{|\cI_\cT|}{|\cI_2|}\le n\binom{n-1}{3}2^{6-n} +\binom n3
2^{6-n}+\sum_{r=4}^{n-1}n\binom {n-1}r 2^{13-2n}.
\]
It is easy to verify that the quantity on the right-hand-side of the
inequality is less than $1$ for all $n\ge 21$.

This essentially completes the proof. Indeed, suppose $\cB'\subset
[n]^{(2)}$ is a family of size $i$ with strictly more intersecting
pairs than $\cB$. Let $a_r'$ and $b'$ be the corresponding values for
$\cB'$. Then $a_0'=1$, $a_1'=i$, and $a_2'\ge a_2+1$. Let $\cI'$
be the collection of intersecting families of size $s$ in $[n]^{(\ge
  3)}\cup \cB'$. Using the above we have
\begin{align*}
|\cI'|&= \sum_{r=0}^{n-1}a_r'|\cI_r|+b'|\cI_\cT|\\
&\ge  |\cI_0|+i|\cI_1|+a_2'|\cI_2|\\
&\ge |\cI_0|+i|\cI_1|+(a_2+1)|\cI_2|\\
& > \sum_{r=0}^{n-1}a_r|\cI_r|+b|\cI_\cT|\\
&=|\cI|,
\end{align*}
contradicting the maximality of $\cA$.\qed
\section{The middle-layer cases of Theorem~\ref{t:unique}}\label{s:middle}
In this section we conclude the proof of Theorem~\ref{t:unique} by
showing that it holds for $\frac n2-1\le r\le\frac n2$.
We consider separately the cases of $n$ even and $n$ odd.

First we deal with some cases where $N$ is not too close to the
bound stated in Theorem~\ref{t:unique}. In each case we prove a slight
variant on Lemma~\ref{l:strict} and use it to deduce the result.
\subsection*{Case 1:\textnormal{ $n=2t$ and $\sum_{k=t}^n\binom nk\le N< \sum_{k=t-1}^n\binom nk$}}
The bounds on $N$ imply that $r=t-1=\frac n2-1$. Also, the lower bound
on $N$ is equal to $2^{n-1}+\frac12\binom nt$ so this covers nearly
all of the remaining possibilities for $N$.
\begin{lemma}\label{l:strict2}
Let $n$ be even, $2\le s \le 2^{n-1}$ and $\ell=\frac n2-1$. Suppose that
$\cA$ satisfies $[n]^{(\ge \ell+1 )}\setminus\{[n-\ell,n]\} \subset \cA$,
$[n-\ell,n]\not \in \cA$ and $[\ell]\in \cA$. Then there is a
$(U,v,f)$-compression $C$ such that
$|\cI^{(s)}(C(\cA))|>|\cI^{(s)}(\cA)|$.
\end{lemma}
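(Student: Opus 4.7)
My plan is to adapt the proof of Lemma~\ref{l:strict} to the borderline case $\ell=n/2-1$. The key new feature is that $[n]^{(\ge\ell+1)}$ now contains the whole middle layer $[n]^{(n/2)}$, which is not itself intersecting, so the maximal intersecting family that we want to sit inside $C(\cA)$ must carefully avoid complementary pairs in this layer. The stronger hypothesis $[n]^{(\ge\ell+1)}\setminus\{[n-\ell,n]\}\subset\cA$ is exactly what lets us arrange this.

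I would take $v=n$ and $U=[\ell]\cup[\ell+2,n-1]$ (of even size $2\ell$), with $f$ a fixed-point-free involution on $U$ swapping $[\ell]$ and $[\ell+2,n-1]$; then $C$ sends $[\ell]$ to $[n-\ell,n]$. The first step will be to check that $C(\cA)\supseteq[n]^{(\ge\ell+1)}$: the set $[n-\ell,n]$ is added as $C([\ell])$, and any other $B\in[n]^{(\ge\ell+1)}$ is already in $\cA$; if $n\notin B$ then the image of $B$ has size $|B|+1\ge\ell+2$ and so already lies in $[n]^{(\ge\ell+2)}\subset\cA$, which blocks the move.

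Next I would build a family $\cD''\in\cI^{(s)}(C(\cA))$ containing both $[n-\ell,n]=[\ell+2,n]$ and $[\ell+1,n-1]$; these two sets meet in $[\ell+2,n-1]$ and so are not complementary. Since any two sets of size at least $n/2$ intersect unless they are complementary $n/2$-sets, I can extend $\{[n-\ell,n],[\ell+1,n-1]\}$ to an intersecting family $\cD'\subseteq[n]^{(\ge\ell+1)}\subseteq C(\cA)$ of size $2^{n-1}$ by adjoining $[n]^{(\ge\ell+2)}$ together with one set from each remaining complementary pair in the middle layer; any $\cD''\subseteq\cD'$ of size $s\le 2^{n-1}$ containing the two distinguished sets then works.

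Finally, the preimage argument runs as before. If $\widehat C(\cB)=\cD''$ for some $\cB\in\cI^{(s)}(\cA)$, then since $[n-\ell,n]\notin\cA$, some $A\in\cB$ with $n\notin A$ satisfies $C(A)=[n-\ell,n]$; expanding the image formula shows $\ell+1\notin A$ (otherwise $\ell+1\in C(A)$), hence $A\subseteq U$, and then $f(A)=[\ell+2,n-1]$ forces $A=[\ell]$. Meanwhile $[\ell+1,n-1]$ does not contain $n$, so it has no $C$-preimage other than itself, giving $[\ell+1,n-1]\in\cB$. But $[\ell]$ and $[\ell+1,n-1]$ are disjoint, contradicting that $\cB$ is intersecting. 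The hardest part of the argument is really just verifying that the stronger hypothesis suffices both to preserve $[n]^{(\ge\ell+1)}$ under $C$ and to pin down $[\ell]$ as the unique preimage of $[n-\ell,n]$, so that the middle-layer family $\cD'$ sits inside $C(\cA)$ and the contradiction still goes through; everything else is a direct parallel to Lemma~\ref{l:strict}.
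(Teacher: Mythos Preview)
Your proposal is correct and follows essentially the same approach as the paper. You choose a specific $(U,v,f)$-compression with $v=n$ sending $[\ell]$ to $[n-\ell,n]$ (the paper allows any such), use the same two distinguished sets (the paper's $[n-\ell-1,n-1]$ equals your $[\ell+1,n-1]$ since $n-\ell-1=\ell+1$), build the same maximal intersecting family inside $[n]^{(\ge\ell+1)}\subseteq C(\cA)$, and run the identical preimage contradiction; you are simply more explicit than the paper about why no set of size $\ge\ell+1$ is lost under $C$ and why $[\ell]$ is the unique preimage of $[n-\ell,n]$.
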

\begin{proof}
Exactly as before choose $C$ to be any $(U,v,f)$-compression with
$v=n$ that moves $[\ell]$ to $[n-\ell,n]$, and let $\cC=C(\cA)$ be the
resulting family.  We construct a family in $\cI^{(s)}(\cC)$ that is
not the image of any family in $\cI^{(s)}(\cA)$ under the
injection~$\widehat C$.

Consider the family
\[
\cD=\{[n-\ell,n],[n-\ell-1,n-1]\}\cup[n]^{(> \ell+1)}
\]
Note that $\big|[n-\ell,n]\big|=\big|[n-\ell-1,n-1]\big|=\ell+1=\frac n2$ and
so $\cD$ is intersecting. Thus it extends to a maximal intersecting
family $\cD'$ of size $2^{n-1}$ in $\cP([n])$. Since $\cD'$ contains
all sets of size at least $\ell+2=n/2+1$ and is intersecting, it
contains no set of size less than or equal to $\ell$. By hypothesis $\cA$
contains all of $[n]^{(\ge \ell+1)}$ except $[n-\ell,n]$ and thus so does
$\cC$. Moreover $[\ell]\in \cA$ is moved to $[n-\ell,n]$ so $[n-\ell,n]\in
\cC$. Thus, in fact, $\cC$ contains all of $[n]^{(\ge \ell+1)}$. Hence
$\cD'\subset \cC$.

Let $\cD''$ be any subfamily of $\cD'$ of size $s$ containing
$[n-\ell,n]$ and $[n-\ell-1,n-1]$. Note $\cD''\in \cI(\cC)$. Exactly as
before we see that any intersecting family mapping to $\cD''$ under
$\widehat C$ would have to contain $[\ell]$ and $[n-\ell-1,n-1]$ which is
not possible as $[n-\ell-1,n-1]=[\ell+1,n-1]$ is disjoint from~$[\ell]$.
\end{proof}
\begin{proof}[Proof of Theorem~\ref{t:unique} in this case]
Using Lemma~\ref{l:strict2} instead of Lemma~\ref{l:strict} we can
prove a result analogous to
Lemma~\ref{l:unique-not-all-top}. Combining this with
Corollary~\ref{c:unique-all-top} is sufficient to complete the proof
in this case.
\end{proof}
\subsection*{Case 2:  \textnormal{$n=2t+1$ and $\sum_{k=t+1}^n\binom nk+\binom{n-1}{t-1}\le N< \sum_{k=t}^n\binom nk$}}
In this case the bounds on $N$ imply that $r=t=\frac{n-1}2$. Also, the
lower bound on $N$ is equal to $2^{n-1}+\binom{n-1}{t-1}$.

\begin{lemma}\label{l:strict3}
Let $n$ be odd, $2\le s \le 2^{n-1}$ and $\ell=\frac{n-1}2$. Suppose that
$\cA$ satisfies $[n]^{(\ge \ell+1 )}\setminus\{[n-\ell,n]\} \subset \cA$ and
$[n-\ell,n]\not \in \cA$, and that there exist $A,A'\in [n]^{(\ell)}\cap\cA$
with $n\not \in A$, $n\not\in A'$ and $A\cap A'=\emptyset$.  Then
there is a $(U,v,f)$-compression $C$
with  $|\cI^{(s)}(C(\cA))|>|\cI^{(s)}(\cA)|.$
\end{lemma}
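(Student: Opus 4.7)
My plan is to follow the same template as Lemmas~\ref{l:strict} and~\ref{l:strict2}: construct a $(U,n,f)$-compression $C$ sending some member of $\cA$ to the unique missing $(\ell+1)$-set $[n-\ell,n]$, and then exhibit $\cD''\in\cI^{(s)}(C(\cA))$ with no preimage under $\widehat C$. Since $A,A'$ are disjoint subsets of $[n-1]$ with $|A|+|A'|=2\ell=n-1$, they partition $[n-1]$. By swapping the roles of $A$ and $A'$ if necessary, I will assume throughout that $A\neq[n-\ell,n-1]$; this ensures $A'\cap[n-\ell,n-1]\neq\emptyset$, which will be needed so that $A'$ can serve as an intersecting witness against $[n-\ell,n]$.

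To build $C$, I set $v=n$ and let $I=A\cap[n-\ell,n-1]$, noting that $|I|\le\ell-1$ by our convention. The goal is to arrange $C(A)=[n-\ell,n]$, which is equivalent to $(A\setminus U)\cup f(A\cap U)=[n-\ell,n-1]$. If $|I|$ is even, I take $U=A\cup[n-\ell,n-1]$ and let $f\colon U\to U$ be the involution which pairs up the elements of $I$ in pairs (possible as $|I|$ is even) and, via any chosen bijection, swaps $A\setminus[n-\ell,n-1]$ with $[n-\ell,n-1]\setminus A$. If $|I|$ is odd (so $|I|\ge1$), I pick any $x\in I$ and instead take $U=(A\cup[n-\ell,n-1])\setminus\{x\}$, which has even size $2\ell-|I|-1$; I then define $f$ on $U$ analogously using $I\setminus\{x\}$ in place of $I$, exploiting $|I\setminus\{x\}|$ being even. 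In either case a direct check gives $C(A)=[n-\ell,n]$, and so $A$ is genuinely moved. A parallel computation shows $C(A')\cap[\ell]\neq\emptyset$ (in the non-degenerate subcases this is because $A'\setminus U=A'\cap[\ell]\neq\emptyset$, and in the degenerate case $A=[\ell]$, $A'=[n-\ell,n-1]$ one computes $C(A')=\{n\}\cup[\ell]$), hence $C(A')\neq[n-\ell,n]$ and $A'$ is not moved. No set of size $\ge\ell+1$ is moved either, since each such image has size $\ge\ell+2$ and $[n]^{(\ge\ell+2)}\subset\cA$. Consequently $[n]^{(\ge\ell+1)}\cup\{A'\}\subseteq C(\cA)$.

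Now let $\cD'=\bigl([n]^{(\ge\ell+1)}\setminus\{A\cup\{n\}\}\bigr)\cup\{A'\}$. Because $n$ is odd, $[n]^{(\ge\ell+1)}$ is already a maximal intersecting family; removing its unique member disjoint from $A'$ (namely $A\cup\{n\}=[n]\setminus A'$) and inserting $A'$ produces another maximal intersecting family of size $2^{n-1}$, sitting inside $C(\cA)$. Let $\cD''$ be any subfamily of $\cD'$ of size $s$ containing both $[n-\ell,n]$ and $A'$. Suppose, for contradiction, that some $\cB\in\cI^{(s)}(\cA)$ satisfies $\widehat C(\cB)=\cD''$. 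Because $[n-\ell,n]\in\cD''\setminus\cA$, it must arise as $C(E)$ for some $E\in\cB$, and the construction identifies the unique such $E$ as $A$, so $A\in\cB$. Because $n\notin A'$ while every $C$-image contains $n$, the only possible preimage of $A'$ is $A'$ itself, so $A'\in\cB$. But $A\cap A'=\emptyset$ contradicts the intersecting property of $\cB$, so $\cD''$ has no preimage under $\widehat C$ and $|\cI^{(s)}(C(\cA))|>|\cI^{(s)}(\cA)|$.

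The main technical subtlety here is the parity-sensitive construction of $f$: because a fixed-point-free involution cannot exist on an odd set, the natural choice $U=A\cup[n-\ell,n-1]$ fails when $|I|$ is odd, and we must trim $U$ by one element of $I$ and let that element pass through as part of $A\setminus U$. Once this parity adjustment is in place, verifying $C(A)=[n-\ell,n]$ and $C(A')\neq[n-\ell,n]$ in both parity regimes is the routine part, and the witness argument then proceeds exactly as in Lemmas~\ref{l:strict} and~\ref{l:strict2}, with $A'$ playing the role that $[\ell+1,n-1]$ and $[n-\ell-1,n-1]$ played there.
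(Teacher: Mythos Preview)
Your proof is correct and follows essentially the same route as the paper's: the same WLOG assumption (equivalent to the paper's, noting that $A,A'$ partition $[n-1]$), the same compression target $[n-\ell,n]$ with $v=n$, the same witness family $\cD'=[n]^{(\ge\ell+1)}\setminus\{[n]\setminus A'\}\cup\{A'\}$, and the same contradiction via $A,A'\in\cB$. The only notable difference is that you explicitly construct the $(U,n,f)$-compression, handling the parity obstruction on $|A\cap[n-\ell,n-1]|$, whereas the paper simply asserts such a compression exists; and you verify $A'\in C(\cA)$ by directly computing $C(A')=[\ell]\cup\{n\}\in\cA$, while the paper instead observes that $A$ is the \emph{only} $\ell$-set that can move (because any moved $\ell$-set lands on an $(\ell+1)$-set, all of which except $[n-\ell,n]$ lie in $\cA$, and the map $B\mapsto C(B)$ is injective).
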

\begin{proof}
As $A$ and $A'$ are distinct we may assume without loss of generality
that $A'\not = [\ell-1]$. In particular this implies that
$A'\cap[n-\ell,n]\not=\emptyset$.  

This time we choose $C$ to be any $(U,v,f)$-compression with $v=n$
that moves $A$ to $[n-\ell,n]$. Let $\cC=C(\cA)$ be the resulting family.
We again construct a family in $\cI^{(s)}(\cC)$ that is not the image
of any family in $\cI^{(s)}(\cA)$ under the injection~$\widehat C$.

Let
\[
\cD=[n]^{(\ge \ell+1)}\setminus \{[n]\setminus A'\}\cup \{A'\}.
\]
This is itself a maximal intersecting family. Since $C(A)=[n-\ell,n]$ we
see that $\cC$ contains all of $[n]^{(\ge \ell+1)}$. Further, since $A$ is the
only set in $\cA$ of size $\ell$ that moves, we see that $A'\in \cC$.
Hence $\cD\subset\cC$.

Let $\cD''$ be any subfamily of size $s$ of $\cD$ containing
$[n-\ell,n]$ and $A'$. Then $\cD''\in\cI(\cC)$. Similarly to the previous
cases, we see that any intersecting family mapping to $\cD''$ under
$\widehat C$ would have to contain $A$ and $A'$ which is not possible.
\end{proof}
\begin{proof}[Proof of Theorem~\ref{t:unique} for this case]
Here we require a little more care.  If $[n]^{(\ge r+1)}\subset \cA$
then Corollary~\ref{c:unique-all-top} tells us that $\cA$ has the
required form.  Hence we may assume that $\cA$ does not contain all of
$[n]^{(\ge r+1)}$.  We aim to compress $\cA$ into a form where we can
apply Lemma~\ref{l:strict3}.

Exactly as in the proof of Lemma~\ref{l:unique-not-all-top} we
construct a sequence of families $\cA=\cA_0,\cA_1,\cA_2,\ldots, \cA_k$
such that $[n]^{(\ge r+1)}\not\subset \cA_i$ for any $i$, by applying
at each stage an allowed compression. Let $\cA'=\cA_k$ be the final
compressed family. As before the only set of $[n]^{(\ge r+1)}$ not in
$\cA'$ is $[n-r ,n]$.

Since $t=r$ the lower bound on $N$ implies that $|\cA'\cap [n]^{(\le
  r)}|\ge \binom{n-1}{r-1}+1$. We claim that, in fact, there are at
least this many sets in the layer $[n]^{(r)}$.

If $\cA'$ contains no set of size less than $r$ then the claim holds
trivially. Otherwise, $\cA'\cap [n]^{(\le r-1)}$ is a non-empty
left-compressed up-set in $[n]^{(\le r-1)}$, and so
$[r-1]\in\cA'$. Moreover, there is a $(U,v,f)$-compression with $v=r$
moving $[r-1]$ to $\{r\}\cup[n-r+2,n]$.  Since all sets added by such
a compression contain $v=r$ this does not add $[n-r,n]$ and, thus,
$\cA'$ is closed under this compression. Hence
$\{r\}\cup[n-r+2,n]\in\cA'$.

Since $\cA'$ is left-compressed and $\{r\}\cup[n-r+2,n]\in\cA'$, the
family $\cA'$ also contains every set that can be obtained from
$\{r\}\cup[n-r+2,n]$ by (repeated) left-compression. This includes
every set in $[n]^{(r)}$ containing an element less than or equal to
$r$. It is easy to see that there are strictly more than
$\binom{n-1}{r-1}$ such sets and so our claim holds.

Now since $\cA'$ contains strictly more than $\binom{n-1}{r-1}$ sets
in $[n]^{(r)}$, the Erd\H{o}s-Ko-Rado Theorem tells us that $\cA'$
must contain two disjoint sets $A$ and $A'$.  We now complete the proof by
applying Lemma~\ref{l:strict3} unless either $A$ or $A'$ contains $n$.

So assume, without loss of generality, that $n\in A$.  Now there must
be some $m\not=n$ contained in neither $A$ nor $A'$.  Since $\cA'$ is
left compressed, and so in particular $mn$-compressed, the set
$A''=A\cup\{m\}\setminus\{n\}$ is also in $\cA'$. The set $A''$ is
disjoint from $A'$ and does not contain $n$. Hence we can apply
Lemma~\ref{l:strict3} with sets $A''$ and $A'$ to conclude the proof
in this case.
\end{proof}

\subsection*{The final few cases}
For completeness we prove the final few cases of
Theorem~\ref{t:unique}. The arguments in this section are completely
different from those given earlier: they are not compression-based.
The only remaining cases are 
\begin{itemize}
\item  $n=2t$ with $2^{n-1}+\frac12\binom
{n}{t}-t<N< 2^{n-1}+\frac12\binom {n}{t}$; and
\item $n=2t+1$ with
$2^{n-1}+\binom {n-1}{t-1}-t-1<N< 2^{n-1}+\binom {n-1}{t-1}$.
\end{itemize}

Any family of size $N$ must contain at least $N-2^{n-1}$ complementary
pairs.  For $N$ in the range we are now considering, Katona, Katona
and Katona~\cite{3kat} give an example of a family containing exactly
this many complementary pairs and no other non-intersecting
pairs. Moreover, they observe that any such family contains the
maximal number of intersecting subfamilies of size $s$ for every $s$.
Conversely, it is easy to check that if a family $\cA$ contains the
maximal number of subfamilies of size $s$ for any fixed $s$ then all
non-intersecting pairs in $\cA$ must be complementary.

It follows that each set in a complementary pair must be a minimal
element of $\cA$. Furthermore the subfamily $\cB$ given by
\[
\cB=\begin{cases}\cA\cap[n]^{(\le t)}\qquad&\text{$n=2t+1$ odd}\\
\cA\cap\left([n]^{(< t)}\cup \{A\in [n]^{(t)}:1\in A\}\right)\qquad&\text{$n=2t$ even}\\
\end{cases}
\]
must be intersecting since in each case we take the intersection of
$\cA$ with a family not containing any complementary pairs.  Note that
$|\cA|\le 2^{n-1}+|\cB|$ and hence, using the lower  bound on $N$, that
$|\cB|>\binom{n-1}{t-1}  -(n-t)$.
 Since every set in a complementary pair is
minimal and every complementary pair contains an element of $\cB$, the
following lemma completes the proof. 

\begin{lemma}
  Let $\cB\subset [n]^{(\le t)}$ be an intersecting family with
  $\cB\not\subset [n]^{(t)}$. Then $\cB$ has at most
  $\binom{n-1}{t-1}-(n-t)$ minimal elements.
\end{lemma}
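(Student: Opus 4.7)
\textbf{First}, I would work with the antichain $\mathcal{M}$ of minimal elements of $\cB$. Then $\mathcal{M}$ is intersecting (as a subfamily of $\cB$), and since $\cB\not\subseteq[n]^{(t)}$ there is some $B\in\cB$ of size $<t$, whose minimal-element-below lies in $\mathcal{M}$ and has size at most $|B|<t$. So it suffices to prove that every intersecting antichain $\mathcal{M}\subseteq[n]^{(\le t)}$ containing at least one set of size $<t$ satisfies $|\mathcal{M}|\le\binom{n-1}{t-1}-(n-t)$.

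\textbf{Second}, I would apply standard left-compression (shifting) to $\mathcal{M}$; shifting preserves cardinality, antichain structure, and the intersecting property, and does not change individual set sizes, so we may assume $\mathcal{M}$ is shifted, and hence $[s]\in\mathcal{M}$ for $s=\min_{M\in\mathcal{M}}|M|\le t-1$. Write $\mathcal{M}_t=\mathcal{M}\cap[n]^{(t)}$, $\mathcal{N}=\mathcal{M}\cap[n]^{(<t)}$ (non-empty and containing $[s]$), and set $\mathcal{U}=\{A\in[n]^{(t)}:M\subsetneq A\text{ for some }M\in\mathcal{N}\}$. The antichain property gives $\mathcal{U}\cap\mathcal{M}_t=\emptyset$, and since $\mathcal{M}$ is intersecting the union $\mathcal{U}\sqcup\mathcal{M}_t$ is an intersecting family of $t$-sets; by Erd\H os--Ko--Rado $|\mathcal{U}|+|\mathcal{M}_t|\le\binom{n-1}{t-1}$. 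Combined with $|\mathcal{M}|=|\mathcal{M}_t|+|\mathcal{N}|$, this yields
\[
|\mathcal{M}|\le\binom{n-1}{t-1}-\bigl(|\mathcal{U}|-|\mathcal{N}|\bigr),
\]
so it suffices to establish the auxiliary bound $|\mathcal{U}|\ge|\mathcal{N}|+(n-t)$.

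\textbf{Third}, for the auxiliary bound, the $n-t+1$ sets $T_j:=[t-1]\cup\{j\}$ ($j\in[t,n]$) all properly contain $[s]\in\mathcal{N}$, and so lie in $\mathcal{U}$. For each remaining $M\in\mathcal{N}\setminus\{[s]\}$ the antichain property forces some $k_M\in[s]\setminus M$; then any $t$-superset of $M$ omitting $k_M$ fails to contain $[t-1]$ (since $k_M\in[s]\subseteq[t-1]$) and so differs from every $T_j$, and there are at least $n-t\ge 1$ such ``safe'' supersets per $M$. A Hall's theorem matching argument then yields an injection of $\mathcal{N}\setminus\{[s]\}$ into these safe supersets, giving $|\mathcal{U}|\ge(n-t+1)+(|\mathcal{N}|-1)=|\mathcal{N}|+(n-t)$, as required. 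The main obstacle is the verification of Hall's condition when $|\mathcal{N}|$ is large: while each individual $M$ has at least $n-t$ safe supersets, showing that the union of safe supersets over an arbitrary $S\subseteq\mathcal{N}\setminus\{[s]\}$ has size at least $|S|$ requires careful use of the shifted (and intersecting) structure of $\mathcal{N}$, or alternatively a Kruskal--Katona-style lower bound on the upper shadow of $\mathcal{N}$ restricted to $t$-sets not containing $[t-1]$.
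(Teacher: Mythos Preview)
Your overall framework---split the minimal elements into the layer-$t$ part $\mathcal{M}_t$ and the below-$t$ part $\mathcal{N}$, take the upper shadow $\mathcal{U}$ of $\mathcal{N}$ in layer $t$, apply Erd\H os--Ko--Rado to $\mathcal{U}\sqcup\mathcal{M}_t$, and then prove the auxiliary inequality $|\mathcal{U}|\ge|\mathcal{N}|+(n-t)$---is exactly the route the paper takes. But two steps in your execution are genuine gaps.

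\textbf{Shifting does not preserve the antichain property.} Your second step asserts that left-compression preserves antichain structure; it does not, even for intersecting families. For instance, with $n=5$ and $t=3$, the intersecting antichain $\{\{1,2\},\{1,3,4\}\}$ becomes $\{\{1,2\},\{1,2,3\}\}$ under the $24$-shift. Once the antichain property fails, the disjointness $\mathcal{U}\cap\mathcal{M}_t=\emptyset$ on which the EKR step rests is no longer guaranteed, and the whole count breaks down. The paper avoids this entirely: it never shifts, and instead handles the fact that $\mathcal{N}$ may span several layers by invoking Kruskal--Katona to conclude that $|\mathcal{N}|$ is at most the size of its upper shadow in layer $t-1$, thereby reducing to the case $\mathcal{N}\subset[n]^{(t-1)}$ without touching the rest of the family.

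\textbf{Your Hall argument is not actually carried out.} You yourself flag the verification of Hall's condition as ``the main obstacle'' and leave it unresolved. The paper's treatment here is quite clean and worth comparing: having reduced to $\mathcal{N}\subset[n]^{(t-1)}$, it forms the natural bipartite inclusion graph between $[n]^{(t-1)}$ and $[n]^{(t)}$, deletes one fixed $A\in\mathcal{N}$ together with its $n-t+1$ neighbours $\Gamma(A)$, and observes that in the remaining graph every vertex of $[n]^{(t-1)}\setminus\{A\}$ still has degree at least $n-t\ge t$, which is at least the degree on the other side. Hall's condition is then immediate by the standard degree argument, and the resulting matching gives $|\mathcal{U}|\ge(|\mathcal{N}|-1)+|\Gamma(A)|=|\mathcal{N}|+(n-t)$. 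This sidesteps any need for the shifted structure or for identifying specific ``safe'' supersets.
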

\begin{proof}
Let \[
\cU=\{B\in\cB: \text{$B$ minimal, $|B|<t$}\}
\]
and 
\[
\cV=\{B\in\cB: \text{$B$ minimal, $|B|=t$}\}.
\]
Let $\partial \cU=\{A\in [n]^{(t)}:B\subset A $ for some $B\in\cU\}$
be the upper shadow of $\cU$ in layer $t$. Note that, by
definition of $\cU$ and $\cV$, the families $\partial\cU$ and $\cV$
are disjoint. It is easy to see that $\partial \cU\cup\cV$ is an
intersecting family and so, by the Erd\H{o}s-Ko-Rado Theorem, must
have size at most $\binom{n-1}{t-1}$. Thus the total number of
minimal elements of $\cB$ is
\[
|\cV|+|\cU|= |\cV|+|\partial \cU|+|\cU|-|\partial
\cU|\le \binom{n-1}{t-1}-(|\partial \cU|-|\cU|)
\]
and so it suffices to show that $|\partial \cU|-|\cU|\ge n-t$. As
$\cU$ is an antichain, it follows easily from the Kruskal-Katona
Theorem that $\cU$ is no larger than its upper shadow in the $(t-1)$th
layer. Hence, we may assume $\cU\subset [n]^{(t-1)}$.

Form a bipartite graph $G$ with vertex-sets $[n]^{(t-1)}$ and
$[n]^{(t)}$. For $A\in[n]^{(t-1)}$ and $B\in[n]^{(t)}$ we take $AB$ to
be an edge of $G$ whenever $A\subset B$. Every vertex in $[n]^{(t-1)}$
has degree $n-t+1$ and every vertex in $[n]^{(t)}$ has degree $t$. Fix
$A\in\cU$ and let $\Gamma(A)$ denote the set of neighbours of $A$ in
$G$. Form a graph $G'$ by deleting $\{A\}\cup\Gamma(A)$ from $G$. Since every
vertex in $[n]^{(t-1)}\setminus \{A\}$ is joined to at most one
deleted vertex, we see that the degree in $G'$ of every vertex in
$[n]^{(t-1)}\setminus \{A\}$ is at least $n-t$. Hence, since $n-t\ge
t$, a standard application of Hall's Theorem shows that there exists a
matching in $G'$ from $[n]^{(t-1)}\setminus\{A\}$ to
$[n]^{(t)}\setminus \Gamma(A)$. Thus
\[
|\partial \cU|\ge |\cU|-1+|\Gamma(A)|=|\cU|+n-t.\qedhere
\]
\end{proof}
\subsection*{Constructions showing the bound is tight}
Recall that a family $\cA$ of size $N$ containing precisely
$N-2^{n-1}$ complementary pairs and no other non-intersecting pairs
maximises the number of intersecting subfamilies of every possible
size. We remark that an equivalent condition is that $\cA$ meets every
complementary pair in $\cP([n])$ and the only non-intersecting pairs
in $\cA$ are complementary.

\renewcommand{\labelenumi}{(\roman{enumi})}

\begin{theorem}\label{t:construct}
Suppose that
\begin{enumerate}
\item $n=2t$ and $2^{n-1}<N\le 2^{n-1}+\tfrac12\tbinom{n}{t}-t$; or
\item $n=2t+1$ and $2^{n-1}<N\le 2^{n-1}+\tbinom {n-1}{t-1}-t-1$. 
\end{enumerate}
Then there exists a family $\cA$ containing the maximal number of
intersecting families of every possible size that is not of the form
$[n]^{(\ge t+1)}\cup\cB$ for any $\cB\subset [n]^{(t)}$.
\end{theorem}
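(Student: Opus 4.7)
The plan is to exhibit in each case an explicit family $\cA$ of size $N$ that contains the set $[t-1]$. Since $|[t-1]|=t-1<t$, such an $\cA$ automatically fails to be of the form $[n]^{(\ge t+1)}\cup\cB$ for any $\cB\subset[n]^{(t)}$. To confirm that $\cA$ maximises the number of intersecting subfamilies of every possible size, I would invoke the criterion recalled just before the theorem: it suffices to check that $\cA$ meets every complementary pair in $\cP([n])$ and that the only non-intersecting pairs in $\cA$ are themselves complementary.

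For case (i) ($n=2t$), I would take $\cA=[n]^{(\ge t+1)}\cup\{[t-1]\}\cup\cB$, where $\cB\subset[n]^{(t)}$ is to be chosen. The only set of size at least $t+1$ disjoint from $[t-1]$ is $[t,n]$, which lies in $[n]^{(\ge t+1)}$ and is the complementary partner of $[t-1]$. Hence the constraints on $\cB$ reduce to (a) hitting each complementary pair within $[n]^{(t)}$, and (b) avoiding the $t+1$ sets $\{[t,n]\setminus\{i\}:i\in[t,n]\}$, which are disjoint from $[t-1]$ but not complementary to it. Since any two disjoint $t$-sets in $[2t]$ are automatically complementary, no further intersecting condition on $\cB$ is needed. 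Starting with $\cB$ equal to the star $\{B\in[n]^{(t)}:1\in B\}$ (which satisfies both (a) and (b) and has size $\tfrac12\binom{n}{t}$) and then adjoining complements of non-forbidden sets one at a time, $|\cB|$ ranges from $\tfrac12\binom{n}{t}$ up to $\binom{n}{t}-(t+1)$, so that $|\cA|$ attains every integer value in $[\,2^{n-1}+1,\,2^{n-1}+\tfrac12\binom{n}{t}-t\,]$.

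For case (ii) ($n=2t+1$), the subsets of $[t,n]$ of size at least $t+1$ disjoint from $[t-1]$ include, in addition to $[t,n]$ itself, the $t+2$ further sets $\cR=\{[t,n]\setminus\{i\}:i\in[t,n]\}\subset[n]^{(t+1)}$, none of which is complementary to $[t-1]$. I would therefore set $\cA=\bigl([n]^{(\ge t+1)}\setminus\cR\bigr)\cup\{[t-1]\}\cup\cB$ and force $\cB$ to contain the complementary partners $\{[t-1]\cup\{i\}:i\in[t,n]\}$, so that each pair $(B,B^c)$ with $B\in\cR$ is still met by $\cA$. Since two disjoint $t$-sets in $[2t+1]$ are never complementary, $\cB$ must itself be intersecting; taking $\cB$ to be any subset of the star $\{B\in[n]^{(t)}:1\in B\}$ that contains the forced replacements is automatically intersecting and meets $[t-1]$, and varying $|\cB|$ between $t+2$ and $\binom{n-1}{t-1}$ realises $|\cA|$ across the whole interval $[\,2^{n-1}+1,\,2^{n-1}+\binom{n-1}{t-1}-t-1\,]$.

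The remaining step is the routine verification of the two conditions in the extremality criterion for each construction. This reduces to a case analysis based on which building block ($[n]^{(\ge t+1)}$ or its modification, the singleton $\{[t-1]\}$, or $\cB$) the two sets of a given pair lie in, together with size comparisons. I do not anticipate any real obstacle here; the only point requiring mild care is confirming that the upper endpoint of $N$ is achieved exactly, which the parameter ranges above make explicit.
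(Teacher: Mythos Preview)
Your proposal is correct and follows essentially the same construction as the paper: include the set $[t-1]$, in case~(ii) remove from $[n]^{(\ge t+1)}$ the $(t+1)$-sets disjoint from $[t-1]$, and then vary the $t$-layer part $\cB$ inside the star through~$1$ (together with the forced replacements in case~(ii)) to realise every value of $N$ in the stated range. The only cosmetic difference is the direction of the variation: you start with the smallest admissible $\cB$ and enlarge it, whereas the paper starts with the largest family and deletes sets in $[n]^{(t)}$ containing~$1$.
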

\begin{proof}
(i) Consider the family
\[
\cA=\{[t-1]\}\cup [n]^{(\ge t)}\setminus\{A\in[n]^{(t)}:A\cap [t-1]=\emptyset\}.
\]
This family has size $2^{n-1}+\frac12\binom nt -t$ and meets every
complementary pair in $\cP([n])$, and the only non-intersecting pairs
in $\cA$ are complementary pairs. Thus, $\cA$ satisfies the conclusion
of the theorem for $N=2^{n-1}+\tfrac12\tbinom{n}{t}-t$.  Moreover, for
$2^{n-1}<N< 2^{n-1}+\tfrac12\tbinom{n}{t}-t$ we may obtain a suitable
family by deleting from $\cA$ the appropriate number of sets in
$[n]^{(t)}$ that contain the element 1.

(ii) In this case we apply an identical argument starting from the family
\[
\cA=\{[t-1]\}\cup \{A\in[n]^{(t)}:1\in A\}\cup
[n]^{(\ge t+1)} \setminus \{A\in[n]^{(t+1)}:A\cap [t-1]=\emptyset\}
\]
of size $2^{n-1}+\binom{n-1}{t-1}-t-1$ and deleting sets in
$[n]^{(t)}$ that contain the element~1 but do not contain all of
$[t-1]$.
\end{proof}

\section{Concluding Remarks and Open Questions}
As we remarked earlier there seems to be no reason to believe that the
maximising families for different values of $p$ are the same. However,
in all cases where the maximising families are known, including the
new examples in this paper, they are in fact the same for all
$p$. Even more is true: the known examples simultaneously maximise the
number of intersecting subfamilies of every possible size. We
therefore recall the following question first asked in~\cite{PARkat}.
\begin{question}
Suppose $N>2^{n-1}$. Does there exist a family $\cA$ of size $N$
which simultaneously maximises the  number of intersecting
subfamilies of size $s$ for every $s$?
\end{question}

One could, of course, ask an analogous question for families $\cA$
restricted to lie in a single layer of the cube $\cP([n])$. But here
it is not always possible to simultaneously maximise the number of
intersecting families of every size. Indeed we have seen that in
$[n]^{(2)}$ the family of size $\binom{n-1}{2}$ with the most
intersecting pairs is $\{A\in [n]^{(2)}:n\not \in A\}$. However, this
is obviously not the family containing the most intersecting
subfamilies of size $n$: it does not contain any at all.

The exact extremal families for most values of $N$ remain
unknown. Indeed, even the following question is open.
\begin{question}
  For every $N$ satisfying the lower bound of Theorem~\ref{t:unique},
  is there a unique (up to reordering the coordinates) family $\cA$
  that maximises the probability that $\cA_p$ is intersecting?
\end{question}

The family is not unique for $N$ less than the bound given: for
example, the optimal family constructed in Theorem~\ref{t:construct}
is not the same as that constructed
in~$\cite{3kat}$. Theorem~\ref{t:unique} shows that the family is
unique for certain values of $N$, primarily $N$ of the form
$N=\sum_{k=r}^n \binom nk$.

We remark that, for fixed $s$, the family containing the most
intersecting subfamilies of size $s$ is not always unique: indeed, for
$s=2$, $n\equiv 0,1$ mod 4 and $N=\sum_{k=3}^n\binom nk+\frac12\binom
n2$, the results of Ahlswede and Katona~\cite{MR505076} imply that the
families obtained by taking the union of $[n]^{(\ge3)}$ with a
quasi-clique or a quasi-star are both optimal. However, it is easy to
check that, at least for large $n$, the family with the quasi-star has
far more intersecting triples and, indeed, far more intersecting
families of size $s$ for all $s>2$. Hence the family maximising the
probability $\cA_p$ is intersecting is unique in this case.

 \bibliography{mybib}{}

\begin{thebibliography}{1}

\bibitem{MR572471}
R.~Ahlswede.
\newblock Simple hypergraphs with maximal number of adjacent pairs of edges.
\newblock {\em J. Combin. Theory Ser. B}, 28(2):164--167, 1980.

\bibitem{MR505076}
R.~Ahlswede and G.~O.~H. Katona.
\newblock Graphs with maximal number of adjacent pairs of edges.
\newblock {\em Acta Math. Acad. Sci. Hungar.}, 32(1-2):97--120, 1978.

\bibitem{MR0427079}
P.~Frankl.
\newblock On the minimum number of disjoint pairs in a family of finite sets.
\newblock {\em J. Combinatorial Theory Ser. A}, 22(2):249--251, 1977.

\bibitem{3kat}
G.~O.~H. Katona, G.~Y. Katona, and Z.~Katona.
\newblock Most probably intersecting families of subsets.
\newblock Preprint.

\bibitem{PARkat}
P.~A. Russell.
\newblock Compressions and probably intersecting families.
\newblock arXiv:~math.CO/1108.3207.

\bibitem{MR2657026}
S.~Wagner and H.~Wang.
\newblock On a problem of {A}hlswede and {K}atona.
\newblock {\em Studia Sci. Math. Hungar.}, 46(3):423--435, 2009.

\end{thebibliography}
\bibliographystyle{habbrv}

\end{document}